\def\R{\mathbb{R}}
\def\N{\mathbb{N}}
\def\Rinf{\R\cup \{+\infty\}}
\def\cI{\mathcal{I}}
\def\cJ{\mathcal{J}}
\def\cL{\mathcal{L}}
\def\cN{\mathcal{N}}
\def\cS{\mathcal{S}}
\def\cT{\mathcal{T}}
\def\a{\alpha}
\def\b{\beta}
\def\g{\gamma}
\def\d{\delta}
\def\k{\kappa}
\def\s{\sigma}
\def\p{\partial}
\def\o{\omega}
\def\veps{\varepsilon}
\def\vphi{\varphi}
\def\G{\Gamma}
\def\wto{\rightharpoonup}
\def\transp{{\sf T}}
\newcommand{\dv}[1]{\,{\mathrm d}#1}
\newcommand{\wcheck}[1]{#1\hspace{-.8ex}\mbox{\huge {\lower.45ex \hbox{$\textstyle \check{}$}}} \hspace{.5ex}}
\newcommand{\jump}[1]{\llbracket#1\rrbracket}   
\DeclareMathOperator{\diver}{div}
\DeclareMathOperator{\trace}{tr}
\let\oldmarginpar\marginpar
\renewcommand\marginpar[1]{
  \oldmarginpar[\raggedleft\footnotesize #1]
  {\raggedright\footnotesize #1}}
\newtheorem{definition}{Definition}
\newtheorem{lemma}[definition]{Lemma}
\newtheorem{proposition}[definition]{Proposition}
\newtheorem{theorem}[definition]{Theorem}
\newtheorem{remark}[definition]{Remark}
\newtheorem{remarks}[definition]{Remarks}
\newtheorem{example}[definition]{Example}
\numberwithin{definition}{section}
\definecolor{modmag}{RGB}{179,0,229}
\renewcommand{\text}{\textnormal}
\def\tI{\widetilde{I}}
\def\oh{\overline{h}}
\def\arg{{\rm arg}}
\def\dkt{{\rm dkt}}
\def\dg{{\rm dg}}
\def\tcI{\widetilde{\cI}}
\DeclareMathOperator{\Diver}{Div}
\def\oh{{\o_h}}
\def\stab{{\rm stab}}
\def\inv{{\rm inv}}
\def\trace{{\rm tr}}
\def\tV{\widetilde{V}}
\def\av{{\rm av}}
\def\po{{\rm p1}}
\def\oo{\overline{\o}}
\def\tchi{\widetilde{\chi}}
\newcommand{\jj}[1]{\jump{#1}}
\newcommand{\aver}[1]{\{#1\}} 
\def\nh{\nabla_h}
\def\pnnh{\partial_n \nabla_h}
\def\pnDh{\partial_n \Delta_h}
\def\cSh{{\cup \cS_h}}
\begin{document}
\title[Avoiding the plate paradox]{Necessary and sufficient conditions for avoiding 
Babuska's paradox on simplicial meshes}
\author{S\"oren Bartels}
\address{Abteilung f\"ur Angewandte Mathematik,
Albert-Ludwigs-Universit\"at Freiburg, Hermann-Herder-Str.~10,
79104 Freiburg i.~Br., Germany}
\email{bartels@mathematik.uni-freiburg.de}
\author{Philipp Tscherner}
\address{Abteilung f\"ur Angewandte Mathematik,
Albert-Ludwigs-Universit\"at Freiburg, Hermann-Herder-Str.~10,
79104 Freiburg i.~Br., Germany}
\email{philipp.tscherner@mathematik.uni-freiburg.de}
\date{\today}
\renewcommand{\subjclassname}{
\textup{2010} Mathematics Subject Classification}
\subjclass[2010]{65N12, 65N30, 35J40, 74K20}
\begin{abstract}
It is shown that discretizations based on variational or weak formulations
of the plate bending problem with simple support boundary conditions do 
not lead to failure of convergence when polygonal domain approximations are
used and the imposed boundary conditions are compatible with the nodal interpolation 
of the restriction of certain regular functions to approximating domains. 
It is further shown that this is optimal in the
sense that a full realization of the boundary conditions leads to failure
of convergence for conforming methods. The abstract conditions imply that 
standard nonconforming and discontinuous Galerkin methods converge correctly
while conforming methods require a suitable relaxation of the boundary condition.
The results are confirmed by numerical experiments. 
\end{abstract}
\keywords{Plate bending, finite elements, domain approximation, Babuska paradox}

\maketitle

\section{Introduction} 
The plate or Babu\v{s}ka paradox refers to the failure
of convergence when a linear bending problem with simple support boundary 
conditions on a domain with curved boundary is approximated using
a sequence of problems on approximating polygonal domains, cf.~\cite{Babu63}.
An explanation 
for this is an insufficient consistency in the approximation of the 
curvature of the boundary. Remarkably, numerical experiments show that
typical nonconforming and discontinuous Galerkin methods converge
correctly on sequences of simplicial meshes~\cite{Wiss23-msc}. It is the goal 
of this article to identify criteria for numerical methods that avoid the 
occurrence of the paradox and explain the observed convergence. The main
result is that a suitable relaxation in the treatment of the boundary
conditions avoids the failure of convergence independently of regularity
properties. While this is naturally 
satisfied by canonical realizations of
nonconforming and discontinuous Galerkin methods, in the
case of a conforming method, the boundary condition need to be restricted
to the boundary vertices only, which is typically consistent with the
nodal interpolation of certain regular functions. 

Small elastic deflections $u:\o \to \R$ of a thin plate are described by
a minimization of the energy functional 
\[
I(v) = \frac{\s}{2} \int_\o |\Delta v|^2 \dv{x} + \frac{1-\s}{2} \int_\o |D^2 v|^2 \dv{x}
- \int_\o f v \dv{x}
\]
in a set $V \subset H^2(\o)$ whose definition involves appropriate boundary
conditions. The material parameter $\s$ is the Poisson ratio of the elastic
material and we assume for simplicity that $0\le \s <1$. 
So called conditions of simple support prescribe the 
deflection on the boundary, in the simplest setting via imposing
\[
u|_{\p \o} = 0,
\]
so that $V = H^2(\o)\cap H^1_0(\o)$. Clamped boundary conditions additionally
prescribe the normal of the deformed plate along the boundary, e.g., 
via $\nabla u = 0$ on $\p\o$, so that in this case we have $V = H^2_0(\o)$. 
Because of the density of compactly supported smooth functions in $H^2_0(\o)$
it is straightforward to show that domain approximations are not critical
for clamped boundary conditions. 

In the case of simple support boundary conditions, the unique minimizer $u\in V$
is characterized by the Euler--Lagrange equations 
\[
\Delta^2 u = f \ \text{in }\o, \quad u = \Delta u - (1-\s) \k \p_n u = 0 \ \text{on }\p\o,
\]
where $\k$ denotes the curvature of the boundary (positive for locally convex boundary 
parts), and $\p_n u = \nabla u \cdot n$ is
the outer normal derivative along the boundary. For approximating polygonal domains $\o_m$
one has $\k =0$ away from corner points in which discrete curvature
is captured by Dirac contributions. Owing to the boundary condition $u=0$ on $\p\o_m$
we have however $\nabla u=0$ and hence $\p_n u = 0$ in those points. Thus, formally
the term involving $\k$  in the natural boundary condition disappears for polygonal 
domain approximations. In convex domains $\o_m$ the 
solutions $u_m$ can then be efficiently computed using an operator
splitting, i.e., by introducing the variables $w_m = -\Delta u_m$ and successively 
solving the problems 
\[\begin{split}
(a) & \quad -\Delta w_m = f \ \text{ in }\o_m, \quad  w_m = 0 \ \text{on }\p\o_m, \\
(b) & \quad -\Delta u_m = w_m \ \text{in }\o_m, \quad u_m = 0 \ \text{on }\p\o_m.
\end{split}\]
Since the approximation of boundaries is not critical for Poisson problems with
Dirichlet boundary conditions,
it follows that limits $(u_\infty,w_\infty)$ of the sequence $(u_m,w_m)$ provide the
solution of the problem
\[
\Delta^2 u_\infty = f \ \text{in }\o, \quad u_\infty= \Delta u_\infty = 0 \ \text{on }\p\o.
\]
In particular, $u_\infty$ is independent of $\s$ and in general different from
the true solution~$u$. A rotationally symmetric example with $f=1$ in 
the unit disk has been determined in~\cite{BabPit90}, 
cf. Figure~\ref{fig:sol_and_wrong} and Section~\ref{sec:num_ex}.
This discrepancy is referred to as the plate
or Babu\v{s}ka paradox. A comprehensive discussion of the analytical aspects of this
and related phenomena can be found in~\cite{NaSwSt11,GaGrSw10-book}.
For domain approximations using piecewise quadratic 
boundary curves the piecewise curvature correctly approximates $\k$ in the
sense of functions and coefficients of Dirac contributions associated with corner
points decay superlinearly and are therefore irrelevant. We refer the reader
to~\cite{Scot77,BrNeSu13,ArnWal20} for related ideas and numerical methods. 
Other methods to avoid the paradox introduce certain Dirac contributions
in boundary condition obtained via asymptotic expansions, cf.~\cite{MazNaz86}, 
are based on nonconforming methods~\cite{DavPit00,Davi02,ArnWal20},
or impose the boundary condition in a modified or penalized 
form~\cite{StrFix73,Rann79,UtkCar83}.
The framework of variational convergence adopted here avoids conditional results
based on regularity properties, does not require extension operators,
and leads to weak conditions on penalty parameters. 

\begin{figure}[htb]
\includegraphics[width=4cm]{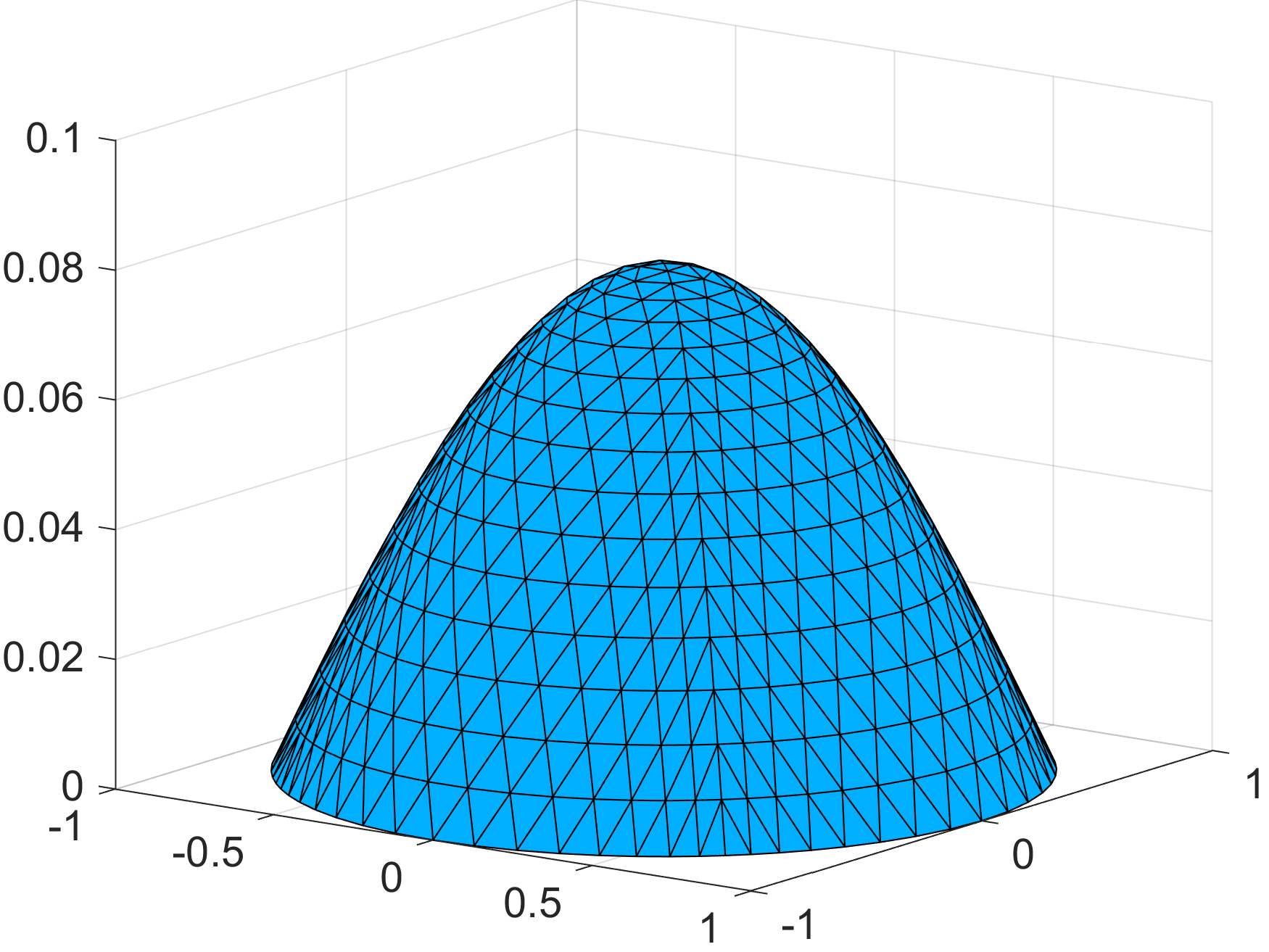} \hspace{.4cm}
\includegraphics[width=4cm]{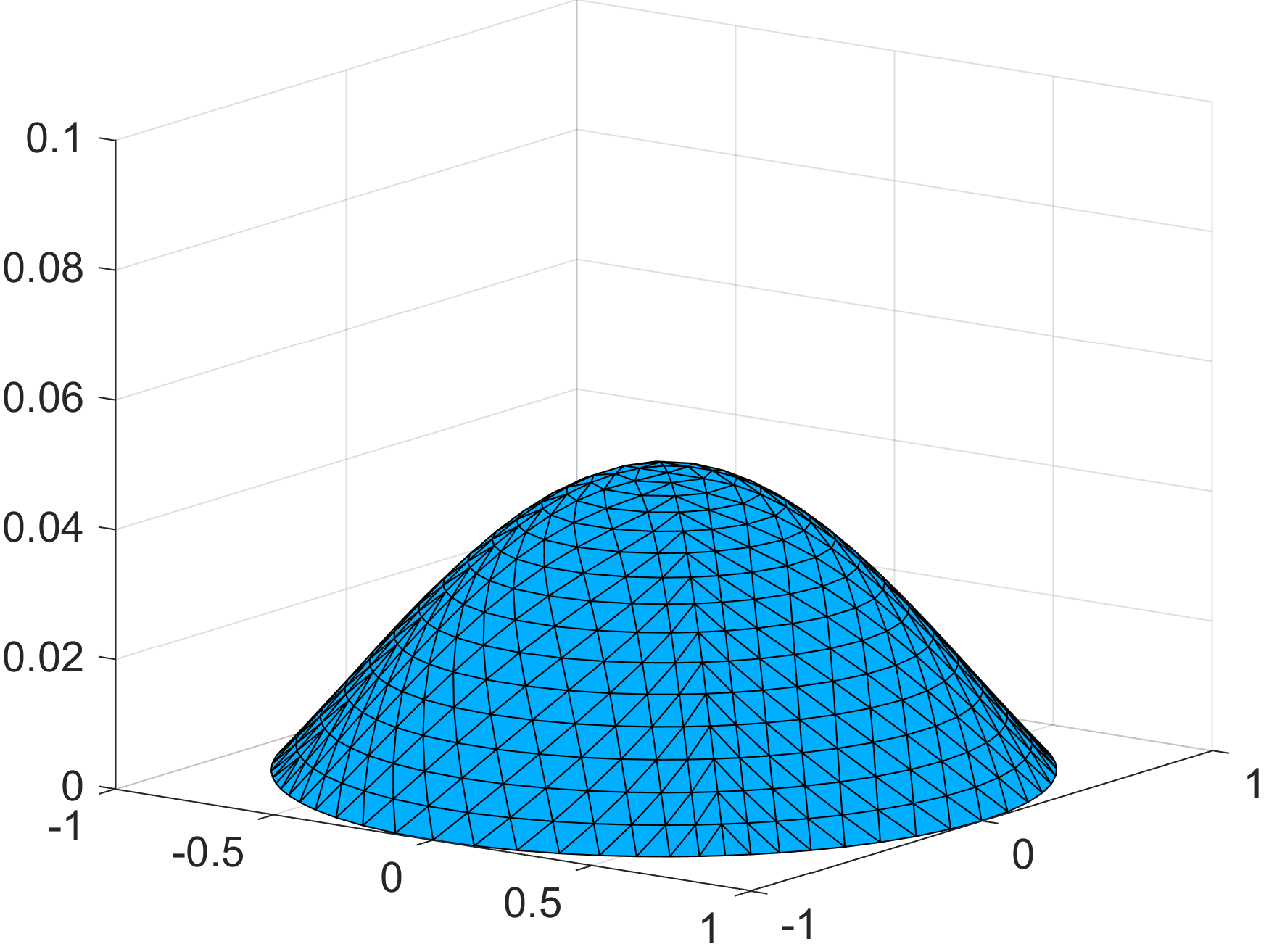}
\caption{\label{fig:sol_and_wrong} Interpolants of the solution of a plate bending problem
with simple support boundary conditions (left) and of the incorrect solution obtained as a
limit of problems on polygonal domain approximations (right).}
\end{figure}

The failure of convergence also occurs when working with the variational problems
or weak formulations resulting from replacing $\o$ by $\o_m$ in 
the energy functional $I$, i.e., considering the minimization of 
\[
I_m(v) = \frac{\s}{2} \int_{\o_m} |\Delta v|^2 \dv{x} 
+ \frac{1-\s}{2} \int_{\o_m} |D^2 v|^2 \dv{x} - \int_{\o_m} f_m v \dv{x}
\]
in the set of functions $v_m \in V_m =  H^1_0(\o_m) \cap H^2(\o_m)$. An integration
by parts and the density of $H^3$ regular functions in $V_m$ reveal 
boundary contributions that vanish for the functionals $I_m$ but not for the
original funtional~$I$, cf.~\cite{Davi02,CoNiSw19}. Hence, variational convergence 
cannot hold. This argument
also applies to finite element methods that are based on subspaces of $V_m$. 
It is thus a necessary condition that approximations are nonconforming.

To avoid the failure of convergence it suffices to introduce a nonconformity in 
the treatment of the boundary condition by reducing it 
to corner points or more generally to $\p\o_m\cap \p\o$,   
i.e., using the admissible space
\[
\tV_m  = \big\{v\in H^2(\o_m): v = 0 \text{ on }\p\o_m \cap \p\o \big\}.
\]
To show that the minimization of $I_m$ in $\tV_m$ converges to the minimization
of $I$ on $V = H^2(\o) \cap H^1_0(\o) $ we use the concept of $\G$-convergence which
avoids imposing regularity conditions.
We assume that $\o_m\subset \o$ such that corner points of $\p\o_m$ belong to 
$\p\o$, i.e., that $\o$ is convex, and always extend functions and derivatives
trivially by zero to $\o$.
The first step consists in showing that accumulation points $v \in L^2(\o)$ of 
sequences $(v_m)$ satisfy the lim-inf inequality
\[
I(v) \le \liminf_{m\to \infty} I_m(v_m).
\]
This is in fact straightforward since $\|v_m\|_{H^1(\o_m)} \le c_{21} \|D^2 v_m\|_{L^2(\o_m)}$
and $D^2 v_m \wto D^2 v$ after selection of a subsequence. By carrying out a
nodal interpolation $\cI_hv \in H^1_0(\o_m)$ on triangulations $\cT_m$ of $\o_m$ it 
follows that the boundary conditions are correctly satisfied in the limit, i.e., $v|_{\p\o}=0$
so that $v\in V$. 
The second step requires showing that the lower bound is attained for every
$v\in V$. For this, it suffices to realize that owing to the definition of $\tV_m$ 
the restrictions $v_m = v|_{\o_m}$ belong to $\tV_m$, and satisfy
$D^2 v_m \to D^2 v$ in $L^2(\o)$ since $\|D^2 v_m\|_{L^2(\o\setminus \o_m)} \to 0$ 
as well as
\[
I(v) = \lim_{m\to \infty} I_m(v_m).
\]
This establishes $\G$ convergence of $I_m$ to $I$ with respect to 
strong convergence in $L^2(\o)$. An immediate consequence is that minimizers
for $I_m$ converge to the minimizer of $I$. It is straightforward to check 
that the convergence is strong in $H^1$ on compact subsets of $\o$.

The remarkable difference between finite element approximations with boundary
conditions imposed on the entire boundary and the corner points of a pentagon
is illustrated in Figure~\ref{fig:arg_nodal_vs_full}. While the first one
is close to the restriction of the exact solution to the pentagon, the second
one misses the exact maximal value by a factor ten. 

\begin{figure}[htb]
\includegraphics[width=4cm]{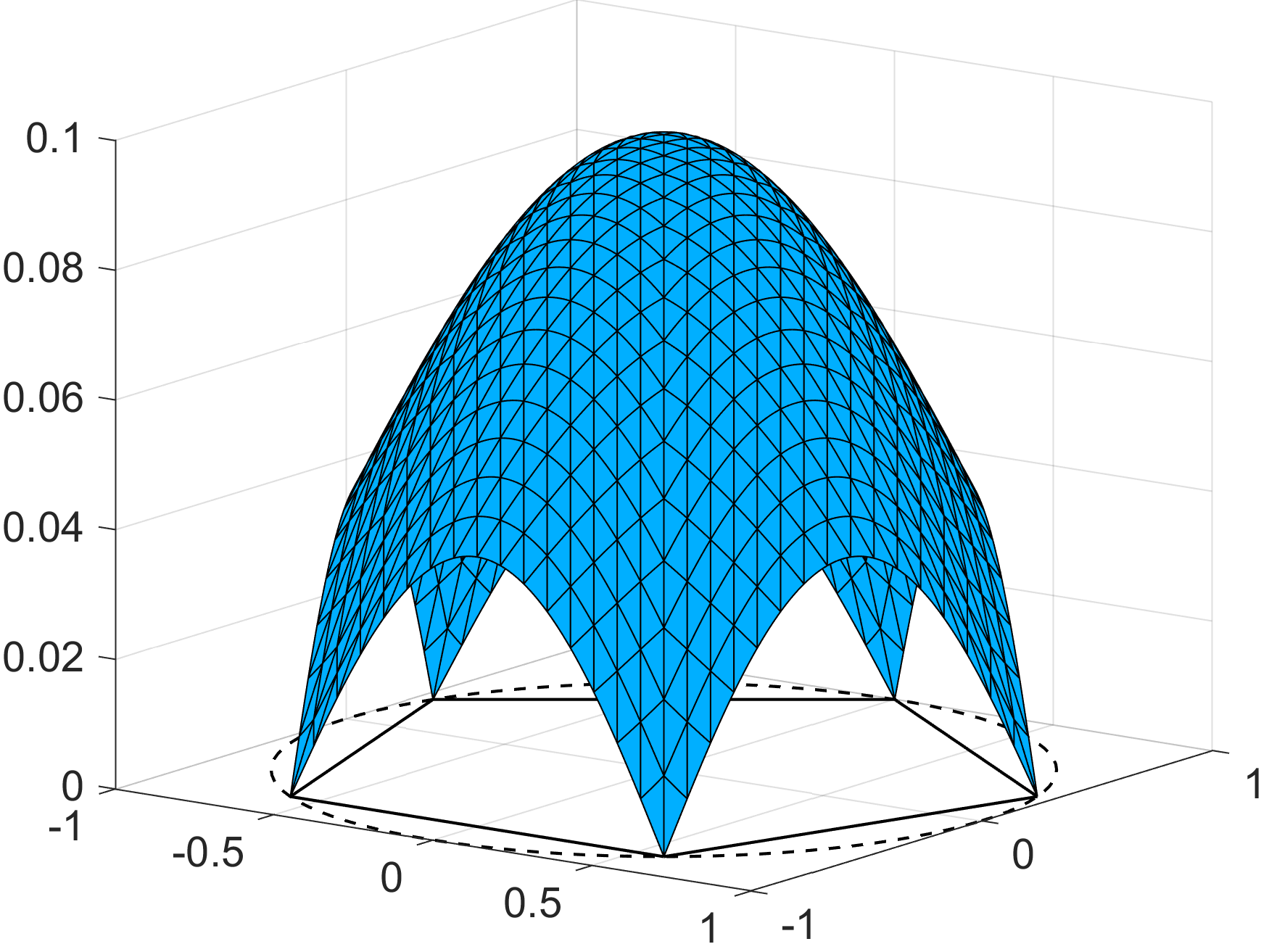} \hspace{.4cm}
\includegraphics[width=4cm]{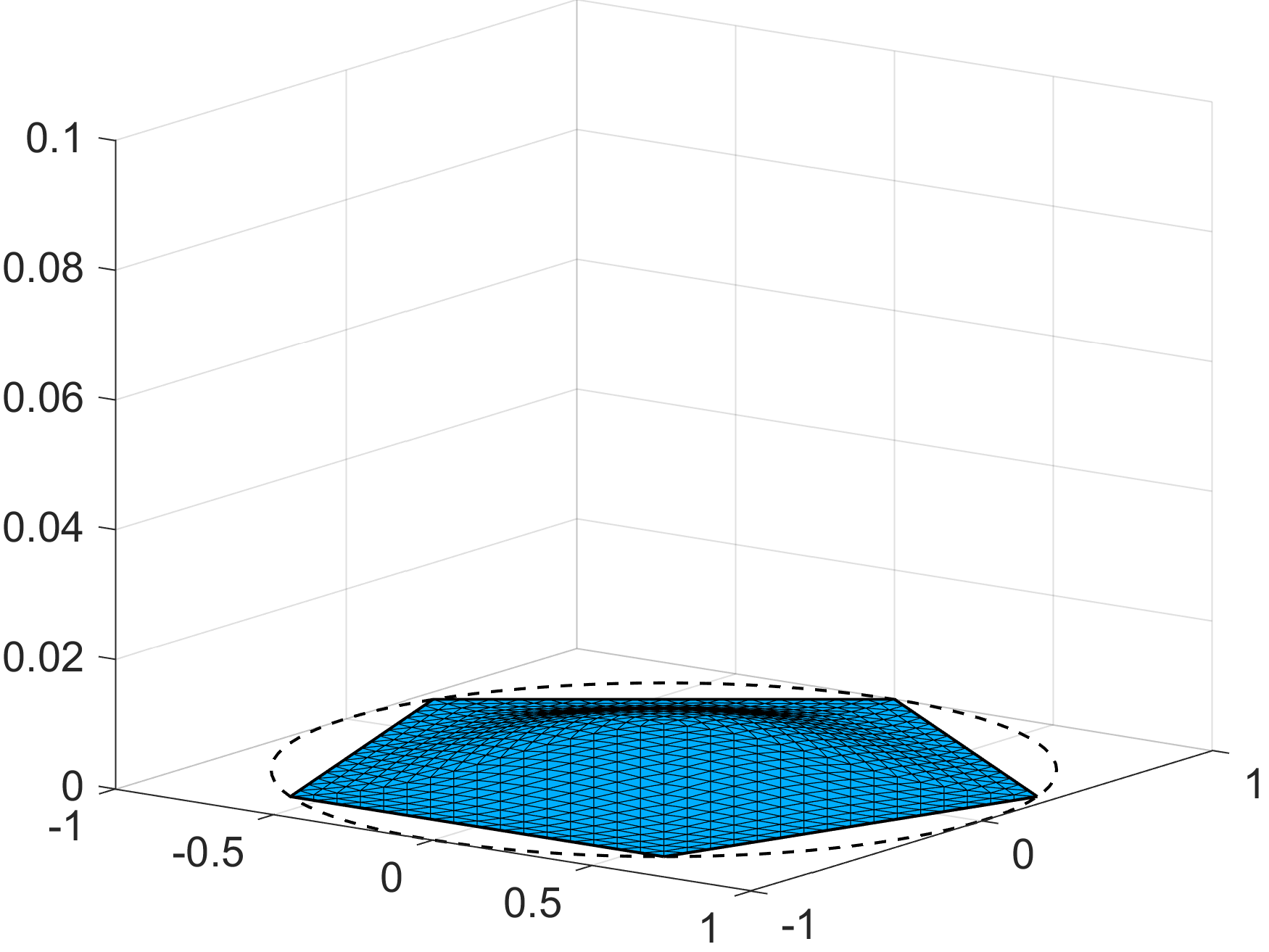}
\caption{\label{fig:arg_nodal_vs_full} Finite element solutions on a pentagon
imposing the boundary conditions at the corner points (left) and along the entire 
boundary (right).}
\end{figure}

Under mild additional integrability conditions on $f$ and $D^2 u$, error estimates
can be derived for the difference of $u_m$ and $u|_{\o_m}$. Letting $a_m$ and
$a$ denote the bilinear forms associated with the energy functionals $I_m$ and
$I$, noting that $u|_{\o_m}$ is admissible in the approximating problem,
and assuming that $E_m: H^2(\o_m)\to H^2(\o)\cap H^1_0(\o)$ are uniformly 
bounded extension operators, straightforward calculations lead to, e.g., if
$\s=0$, 
\[\begin{split}
\|D^2 (u - & u_m)  \|_{L^2(\o_m)}^2   = a_m(u-u_m,u-u_m)  \\
&= \int_{\o\setminus \o_m} f (u-E_m u_m) \dv{x} -  \int_{\o\setminus \o_m} D^2 u :D^2 (u- E_mu_m)\dv{x} \\
&\le c_E \, |\o\setminus \o_m|^{1/2} \big( \| f\|_{L^\infty(\o)} + \|D^2 u\|_{L^\infty(\o)}\big)  \|D^2 (u-E_m u_m)\|.
\end{split}\]
For simplicial triangulations $\cT_{h_m}$ with maximal mesh-size $h_m>0$ 
that define the subdomains $\o_m$ of the domain $\o$ with piecewise $C^2$ boundary,
we have the area difference estimate $|\o\setminus \o_m| \le c h^2$,
so that the domain approximation leads to an error contribution of at least
linear order. We refer the reader to~\cite{Rann79} for related estimates based
on the use of a Strang lemma. 

An important aspect in the transfer of convergence proofs to finite element settings
is the construction of a recovery
sequence via the interpolation of the restriction of functions in $H^3(\o)\cap V$
to $\o_m$. In this restriction only the zero boundary values at the corners
are captured and only these information are seen in the interpolation process.
The discrete admissible set thus has to be appropriately defined to ensure that 
the interpolants belong to it. The convergence proof given above serves as
a template to derive an abstract convergence theory that can be applied to 
various finite element methods. The sufficient conditions are that (1)~the approximating
problems are uniformly coercive in $H^1_0(\o)$, (2)~possible discretizations of
second order derivatives are stable, and (3)~that interpolation operators
map functions from $H^3(\o)\cap V$ into the discrete admissible sets.

As an alternative to imposing the boundary condition in the corner points, one may
impose it via penalty terms with suitably chosen penalty parameter. Letting 
\[
I_{m,\veps}(v) = I_m(v) + \frac{1}{2\veps} \int_{\p\o_m} v^2 \dv{s}
\]
one establishes a $\G$-convergence result if there exists a family of bounded
linear operators $F_m: H^1(\o_m) \to H^1(\o)$ that map traces $v|_{\p\o_m}$ boundedly
to traces $F_m(v)|_{\p\o}$. The condition implies that limits of sequences of functions
with uniformly bounded energies have vanishing traces on $\p\o$. The restrictions $v_m = v|_{\o_m}$
define a recovery sequence for a function $v\in H^3(\o)\cap H^1_0(\o)$ if 
$\veps$ is chosen such that $h_m^4 \veps \to 0$ as $h_m\to 0$ 
since $\|v_m\|_{L^\infty(\p\o_m)} \le h_m^2$. If quadrature is used in the penalty term
it realizes a penalized variant of imposing the boundary condition in the corner
points and in fact no restrictions on the parameters are required. 
These observations explain why discontinuous Galerkin 
methods, that impose the boundary conditions via penalty terms, converge 
correctly on sequences of polygonal domain approximations. 

The outline of this article is as follows. Some auxiliary results are collected
in Section~\ref{sec:aux}. The abstract convergence theory and a result about
failure of convergence are stated in Section~\ref{sec:abstr_res}. The application
of the framework to conforming, nonconforming, and discontinuous Galerkin methods
is discussed in Section~\ref{sec:methods}. Numerical experiments that confirm the 
theoretical results are reported in Section~\ref{sec:num_ex}. 

\section{Auxiliary results}\label{sec:aux}
We cite in this section an important density result from~\cite{CoNiSw19} together
with a boundary representation formula and collect some basic operators and
estimates related to finite element methods. Throughout, we use standard notation
for Sobolev and Lebesgue spaces. We let $(\cdot,\cdot)_A$ denote the $L^2$ inner
product on a set $A$ and occasionally abbreviate 
\[
\|v\| = \|v\|_{L^2(\o)}, \quad (v,w) = (v,w)_\o
\]
for functions $v,w \in L^2(\o)$ possibly obtained as trivial extensions of
functions defined in subsets $\o'\subset \o$. We always assume that $\o$ is a
convex and bounded Lipschitz domain with piecewise $C^{2,1}$ boundary and
finitely many corner points. We note that we have the Poincar\'e inequality
\begin{equation}\label{eq:poincare_h2}
\|\nabla v \| \le c_{\rm P} \|\Delta v\|
\end{equation}
for all $v\in H^2(\o)\cap H^1_0(\o)$ with a constant $c_{\rm P}>0$ that is
uniformly bounded for families of convex domains whose inner and outer
diameters are uniformly bounded. We occasionally use the symbol $\lesssim$ to express
an inequality that holds up to a generic constant factor. 

\subsection{Bending energy}
Crucial for our analysis is a density result for certain regular functions in 
the set $H^2(\o)\cap H^1_0(\o)$, cf.~\cite[Thm. B.5]{CoNiSw19}.

\begin{theorem}[Density] \label{thm:density}
The set $H^3(\o)\cap H^1_0(\o)$ is dense in $H^2(\o)\cap H^1_0(\o)$.
\end{theorem}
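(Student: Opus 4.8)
The plan is to build the approximants not by mollifying $u$ itself --- which would in general violate the boundary condition --- but by regularizing the data of the Poisson problem solved by $u$ and then removing, via an explicit truncation, the corner singularities that this introduces. Given $u\in H^2(\o)\cap H^1_0(\o)$, put $f=-\Delta u\in L^2(\o)$, pick $f_n\in H^1(\o)$ with $f_n\to f$ in $L^2(\o)$, and let $u_n\in H^1_0(\o)$ solve $-\Delta u_n=f_n$ weakly. Since $\o$ is convex one has the a~priori estimate $\|v\|_{H^2(\o)}\lesssim\|\Delta v\|_{L^2(\o)}$ for all $v\in H^2(\o)\cap H^1_0(\o)$ (combine \eqref{eq:poincare_h2} with the convexity bound $\|D^2v\|\le\|\Delta v\|$), so $u_n\in H^2(\o)$ and $\|u_n-u\|_{H^2(\o)}\lesssim\|f_n-f\|_{L^2(\o)}\to0$. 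It therefore suffices to approximate each fixed $u_n$ in $H^2(\o)$ by functions from $H^3(\o)\cap H^1_0(\o)$.

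By interior regularity and, since $\p\o$ is piecewise $C^{2,1}$, elliptic regularity up to the boundary away from the corner points $p_1,\dots,p_J$ (with interior angles $\o_1,\dots,\o_J\in(0,\pi)$), $u_n$ lies in $H^3$ on $\o\setminus\bigcup_jB_\rho(p_j)$ for every $\rho>0$. The corner-regularity theory for the Dirichlet Laplacian on planar domains with $H^1$ data then provides a decomposition
\[
u_n=w_n+\sum_{j=1}^J\kappa_j^{(n)}\,\chi_j\,S_j,\qquad S_j=r_j^{\lambda_j}\sin(\lambda_j\theta_j),\quad\lambda_j=\pi/\o_j,
\]
where $(r_j,\theta_j)$ are polar coordinates centred at $p_j$, $\chi_j$ is a fixed cut-off equal to $1$ near $p_j$ with support in a small neighbourhood of $p_j$, and $w_n\in H^3(\o)$; only the first singular function at each corner occurs because the next exponent $2\pi/\o_j$ exceeds $2$ for $\o_j<\pi$, and possible logarithmic terms (present when $\lambda_j\in\N$) are treated exactly as below. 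Each $\chi_jS_j$ vanishes on $\p\o$, hence so does $w_n$, so $w_n\in H^3(\o)\cap H^1_0(\o)$ needs no approximation, and the task reduces to approximating each $\chi_jS_j$ in $H^2(\o)$ by elements of $H^3(\o)\cap H^1_0(\o)$.

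To this end fix $\psi\in C^\infty([0,\infty))$ with $\psi\equiv0$ on $[0,1/2]$ and $\psi\equiv1$ on $[1,\infty)$, and set $S_{j,\veps}=\chi_j\,\psi(r_j/\veps)\,r_j^{\lambda_j}\sin(\lambda_j\theta_j)$. Then $S_{j,\veps}$ vanishes in a neighbourhood of $p_j$, coincides with $\chi_jS_j$ for $r_j>\veps$, is smooth and compactly supported near $p_j$, and vanishes on $\p\o$ (on the two incident edges because $\sin(\lambda_j\theta_j)=0$ at $\theta_j\in\{0,\o_j\}$, as $\lambda_j\o_j=\pi$), hence $S_{j,\veps}\in H^3(\o)\cap H^1_0(\o)$. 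The difference $\chi_jS_j-S_{j,\veps}$ is supported in $B_\veps(p_j)$, and using $|D^\alpha S_j|\lesssim r_j^{\lambda_j-|\alpha|}$ together with $|D^\alpha\psi(r_j/\veps)|\lesssim\veps^{-|\alpha|}$ on $\{\veps/2<r_j<\veps\}$, a direct estimate of the $L^2$, $\nabla$ and $D^2$ contributions gives $\|\chi_jS_j-S_{j,\veps}\|_{H^2(\o)}\lesssim\veps^{\lambda_j-1}\to0$ as $\veps\to0$. Consequently $w_n+\sum_j\kappa_j^{(n)}S_{j,\veps}\in H^3(\o)\cap H^1_0(\o)$ converges to $u_n$ in $H^2(\o)$, and a diagonal argument over $n$ and $\veps$ completes the proof.

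The crux is the corner analysis: one must know that the non-$H^3$ part of $u_n$ is exhausted by the finitely many explicit functions $S_j$ (so the remainder $w_n$ is genuinely $H^3$), and --- this is where convexity is essential --- that each $S_j$, although not itself in $H^3(\o)$, still belongs to the $H^2$-closure of $H^3(\o)\cap H^1_0(\o)$, which hinges on $\lambda_j=\pi/\o_j>1$. Away from the corners the argument is the routine combination of interior regularity with $H^3$-regularity up to a $C^{2,1}$ boundary, and the regularization of the data in the first step is precisely what is needed to make the corner structure visible at the $H^3$ level.
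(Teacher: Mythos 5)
The paper offers no proof of Theorem~\ref{thm:density} to compare against: it is imported verbatim as \cite[Thm.~B.5]{CoNiSw19}. Judged on its own terms, your strategy --- regularize the data $f=-\Delta u$ in $L^2(\o)$, use that $\Delta\colon H^2(\o)\cap H^1_0(\o)\to L^2(\o)$ is an isomorphism on convex domains to get $H^2$-convergence of the regularized solutions $u_n$, then split off the corner singularities and remove them by a radial truncation --- is a coherent and standard route, and its quantitative core is right: the truncation error $\|\chi_jS_j-S_{j,\veps}\|_{H^2(\o)}\lesssim \veps^{\lambda_j-1}$ tends to zero precisely because convexity forces every interior angle below $\pi$, hence every exponent $\lambda_j>1$, and the selection of exactly one singular function per corner (plus possible logarithmic resonances) is correct for $H^1$ data.

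There is, however, a concrete gap exactly where the paper's setting is nontrivial: corners at which the incident boundary arcs are genuinely curved (the boundary is only piecewise $C^{2,1}$, not polygonal). Your singular functions $S_j=r_j^{\lambda_j}\sin(\lambda_j\theta_j)$ vanish on the two \emph{rays} $\theta_j\in\{0,\o_j\}$, not on the actual arcs emanating from $p_j$; so the assertion ``each $\chi_jS_j$ vanishes on $\p\o$'' fails, neither $S_{j,\veps}$ nor the remainder $w_n=u_n-\sum_j\kappa_j^{(n)}\chi_jS_j$ lies in $H^1_0(\o)$, and the decomposition theorem you invoke is, in the form you use it, a statement about straight-edged polygons. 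The repair is standard but must be made: straighten the two arcs near each $p_j$ by a $C^{2,1}$ diffeomorphism, apply the Kondratiev--Grisvard expansion for the resulting variable-coefficient elliptic operator on a curvilinear sector (whose leading singular functions are the transported $S_j$ up to corrections vanishing on the boundary), and then truncate as you do. As written, the argument is complete only for domains whose corners have straight incident edges, and in the corner-free case (e.g.\ the unit disk of the experiments) it collapses to the trivial observation that $f_n\in H^1(\o)$ and a $C^{2,1}$ boundary already give $u_n\in H^3(\o)\cap H^1_0(\o)$.
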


An important consequence of this result is the following formula from~\cite{Davi03}
and~\cite[Lemma 3.8]{CoNiSw19},
that provides a representation of the total curvature of the graph of a function
by a boundary integral.  

\begin{lemma}[Boundary representation]\label{la:bdy_repr}
For $v\in H^2(\o)\cap H^1_0(\o)$ we have that
\[
- \int_\o  \det D^2 v \dv{x}
= \frac12 \int_{\p\o} \k (\p_n v)^2 \dv{s}.
\]
\end{lemma}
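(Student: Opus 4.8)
The plan is to prove the identity first for smooth functions and then extend by density using Theorem~\ref{thm:density}.

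\medskip

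\noindent\textbf{Step 1: Reduction to regular functions.} Suppose the identity holds for all $v \in H^3(\o)\cap H^1_0(\o)$. For general $v \in H^2(\o)\cap H^1_0(\o)$, pick a sequence $v_k \in H^3(\o)\cap H^1_0(\o)$ with $v_k \to v$ in $H^2(\o)$. The left-hand side is continuous along this sequence: writing $\det D^2 v = \partial_{11}v\,\partial_{22}v - (\partial_{12}v)^2$, convergence $D^2 v_k \to D^2 v$ in $L^2(\o)$ gives $\det D^2 v_k \to \det D^2 v$ in $L^1(\o)$ by the product rule for $L^2$-convergence. For the right-hand side, since $v_k - v \in H^2(\o)\cap H^1_0(\o)$, the trace theorem together with the Poincar\'e inequality~\eqref{eq:poincare_h2} controls $\p_n(v_k - v)$ in $L^2(\p\o)$ by $\|D^2(v_k - v)\|_{L^2(\o)}$; combined with boundedness of $\k$ (here we use the piecewise $C^{2,1}$ hypothesis so that $\k \in L^\infty(\p\o)$) and the polarization identity $(\p_n v_k)^2 - (\p_n v)^2 = \p_n(v_k - v)\,\p_n(v_k + v)$, the boundary integrals converge as well. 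Hence it suffices to treat $v \in H^3(\o)\cap H^1_0(\o)$.

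\medskip

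\noindent\textbf{Step 2: The identity for regular $v$.} The key is the classical divergence-structure (null Lagrangian) identity for the Monge--Amp\`ere operator in two dimensions:
\[
2\det D^2 v = \diver\!\big( \p_1 v\, \p_{22}v - \p_2 v\, \p_{12}v,\ \p_2 v\, \p_{11}v - \p_1 v\, \p_{12}v \big)
= \diver\big( (\nabla v)^\perp \cdot \nabla(\p_2 v),\ -(\nabla v)^\perp\cdot\nabla(\p_1 v)\big),
\]
or more compactly $2\det D^2 v = \diver\big( (\cof D^2 v)\,\nabla v\big)$, which holds pointwise for $v \in C^3$ and, by the $H^3$-regularity, in $L^1(\o)$. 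Integrating over $\o$ and applying the divergence theorem yields
\[
2\int_\o \det D^2 v\,\dd x = \int_{\p\o} \big( (\cof D^2 v)\,\nabla v \big)\cdot n \, \dd s.
\]
Now use $v|_{\p\o}=0$: the tangential derivative of $v$ along $\p\o$ vanishes, so $\nabla v = (\p_n v)\, n$ on $\p\o$ (away from corners, which form a set of measure zero on $\p\o$). Substituting and using that $\cof D^2 v$ is symmetric, the boundary integrand becomes $(\p_n v)^2\, n^\transp (\cof D^2 v)\, n$. It remains to identify $n^\transp(\cof D^2 v)\, n$ on $\p\o$. Because $v$ vanishes on $\p\o$, differentiating the identity $v(\g(s))=0$ twice along an arclength parametrization $\g$ of $\p\o$ gives $n^\transp D^2 v\, t + \dots$ relations; the clean outcome is that the restriction of $D^2 v$ to the tangent direction is governed by curvature, yielding $t^\transp D^2 v\, t = -\k\, \p_n v$ on $\p\o$. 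Since for a symmetric $2\times 2$ matrix $A$ one has $n^\transp (\cof A)\, n = t^\transp A\, t$, this gives $n^\transp (\cof D^2 v)\, n = t^\transp D^2 v\, t = -\k\,\p_n v$. Plugging in produces $2\int_\o \det D^2 v\,\dd x = -\int_{\p\o}\k\,(\p_n v)^2\,\dd s$, which is the claim after dividing by $-2$.

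\medskip

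\noindent\textbf{Main obstacle.} The delicate point is Step~2's boundary analysis: justifying that corner points do not contribute and correctly deriving the identity $t^\transp D^2 v\, t = -\k\,\p_n v$ on $\p\o$ from $v|_{\p\o}=0$. One must differentiate $v\circ\g \equiv 0$ twice, obtaining $\g''\cdot\nabla v + \g'^\transp D^2 v\,\g' = 0$, and then use the Frenet relation $\g'' = \k\, n$ (with the stated sign convention for $\k$) together with $\nabla v = (\p_n v)n$ to arrive at $\k\,\p_n v + t^\transp D^2 v\, t = 0$; sign bookkeeping for the curvature convention and the orientation of $n$ is where errors creep in. At corners the parametrization is only piecewise $C^1$, but since $\p\o$ is piecewise $C^{2,1}$ with finitely many corners, these constitute a null set for the $\dd s$-measure and the boundary integral is unaffected; one should nonetheless remark that the $H^3$-regularity of $v$ (so $\nabla v$ has an $H^{3/2}$, in particular continuous-up-to-the-boundary, trace on each smooth boundary piece) makes the pointwise manipulations on $\p\o$ legitimate. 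Everything else — the null-Lagrangian divergence identity and the density passage — is routine.
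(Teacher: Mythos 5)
Your proof is correct and follows essentially the same route as the paper's sketch: the divergence (null--Lagrangian) structure of $\det D^2 v$ --- your cofactor form $(\mathrm{cof}\,D^2v)\nabla v$ is the paper's $JD^2vJ\nabla v$ up to sign --- followed by the divergence theorem, the reduction $\nabla v=(\p_n v)n$ on $\p\o$, and the identification of $t^\transp D^2v\,t$ with $\k\,\p_n v$ (up to the sign fixed by the orientation convention, the same Frenet bookkeeping the paper uses via $p'=Jn$, $p''=\k n$) obtained by differentiating $v\circ p\equiv 0$ twice along an arclength parametrization. The only substantive addition is that you spell out the density/continuity passage from $H^3(\o)\cap H^1_0(\o)$ to $H^2(\o)\cap H^1_0(\o)$ (your Step 1), which the paper leaves implicit by stating the proof as a sketch for regular functions only.
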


\begin{proof}[Proof (sketched)]
Integration by parts shows for $v\in H^3(\o)\cap H^1_0(\o)$ that
\[
- \int_\o \det D^2 v \dv{x}  = \frac12 \int_\o \diver (JD^2 v J \nabla v) \dv{x} 
= \frac12 \int_{\p\o} (J D^2v J \nabla v) \cdot n \dv{s},
\]
where $J$ denotes the clockwise rotation by $\pi/2$. If $p:(\a,\b)\to \R^2$ is an
arclength parametrization of a boundary segment, we have that
\[
0 = \frac{d^2}{ds^2} (v\circ p) = (p')^\transp (D^2 v \circ p)  p' + (\nabla v \circ p) \cdot p''.
\]
Using $p' = J n$, $p''= \k n$, and $J \nabla v = (\p_n v) J n$ yields the formula.  
\end{proof}

The elementary relation $|D^2 v|^2 = (\Delta v)^2 - 2 \det D^2v$ and 
the lemma imply that the functionals $I$ defined on $\o$ and $I_m$ defined on
polygonal domains $\o_m$ can be represented via 
\begin{equation}\label{eq:fl_curv_term}
I(v) = \frac{1}{2} \int_\o |\Delta v|^2 \dv{x} + \frac{1-\s}{2} \int_{\p\o} \k (\p_n v)^2 \dv{x}
\end{equation}
for $v\in H^2(\o)\cap H^1_0(\o)$ and 
\begin{equation}\label{eq:fl_no_curv_term}
I_m(v) = \frac{1}{2} \int_{\o_m} |\Delta v|^2 \dv{x} 
\end{equation}
for $v\in H^2(\o_m)\cap H^1_0(\o_m)$ and polygonal domains $\o_m$. To
establish the failure of convergence of the functionals $I_m$ and the
correct convergence of the modified functionals $\tI_m$, 
we use the framework of $\G$-convergence, cf.~\cite{AtBuMi06}.

\begin{definition}[$\Gamma$-convergence]
The sequence of functionals $I_m:L^2(\o) \to \Rinf$ is {\em $\G$-convergent}
to $I:L^2(\o)\to \Rinf$ with respect to strong convergence in $L^2(\o)$ 
if the following conditions hold: \\
(i) If $v_m \to v$ in $L^2(\o)$ then we have $I(v)\le \liminf_{m\to \infty} I_m(v_m)$. \\
(ii) For every $v\in L^2(\o)$ there exists a sequence $(v_m)_{m\ge 0}$ with $v_m\to v$ and
$I(v)= \lim_{m\to \infty} I_m(v_m)$. 
\end{definition}

If the sequence of functionals $I_m$ is uniformly coercive, then it follows directly
that (almost) minimizers for $I_m$ converge to minimizers for $I$. 

\subsection{Finite element spaces}
The finite element spaces considered below are defined on regular triangulations 
$\cT_h$ consisting of triangles whose unions define bounded polygonal domains
$\o_h$. The index $h>0$ indicates a maximal mesh size that is assumed to converge
to zero in a sequence $(\cT_h)_{h>0}$. 
We further assume that the triangulations are uniformly shape regular, i.e., 
that the ratios of diameters and inner radii are uniformly bounded. 
Typical finite element spaces are subspaces of spline spaces
\[
\cS^{\ell,k}(\cT_h) = \big\{ v_h \in C^k(\oo_h): v_h|_T \in P_\ell(T)\big\},
\]
where $P_\ell(T)$ denotes the space of polynomials of total degree at most $\ell$ 
on $T$. If $k=0$ then this superscript is omitted. 
An important space is the $P1$ finite element space with vanishing traces
\[
\cS^1_0(\cT_h) = \cS^1(\cT_h) \cap H^1_0(\o_h).
\]
We let $\cN_h$ denote the set of vertices in $\cT_h$ and note that a function
$v_h \in \cS^1(\cT_h)$ satisfies $v_h|_{\p\o_h}= 0$ if and only if $v_h(z)=0$
for all $z\in \cN_h\cap \p \o_h$. The set of sides of elements in $\cT_h$ is denoted by
$\cS_h$. We impose the following canonical conditions that 
relate the triangulations $\cT_h$ to the domain $\o$:
\begin{itemize}
\item[(T1)] The boundary vertices of the triangulation $\cT_h$ belong to the boundary
of $\o$, i.e., $\cN_h \cap \p\o_h \subset \p\o$.
\item[(T2)] The corner points $c_0,c_1,\cdots,c_\ell$ of $\o$ belong to the set of
vertices, i.e., $c_0,c_1,\dots,c_\ell \subset \cN_h$. 
\end{itemize}
With the nodal basis $(\vphi_z)_{z\in \cN_h}$
of $\cS^1(\cT_h)$ the nodal interpolation operator 
$\cI_h^\po:C(\overline{\o}_h) \to \cS^1(\cT_h)$ is defined via
\[
\cI_h^\po v = \sum_{z\in \cN_h} v(z) \vphi_z.
\]
Note that if $v\in H^3(\o)\cap H^1_0(\o)$ then we have that
\[
\cI_h^\po(v|_{\o_h}) \in \cS^1_0(\cT_h).
\]
For all $v\in H^2(\o_h)$ we have the interpolation estimate
\[
\|v-\cI_h^\po v \|_{H^1(\o_h)} \le c_{\cI} h \|D^2 v\|_{L^2(\o_h)}
\]
with a constant $c_{\cI}$ that
remains bounded as $h\to 0$. A node averaging operator 
$\cJ_h^\av:\cL^k(\cT_h) \to \cS^1_0(\cT_h)$ defined on the space of 
elementwise polynomial functions of degree at most $k$ is defined via the nodal 
values $v_z=0$ for $z\in \cN_h\cap \p\o_h$ and
\[
v_z = \frac{1}{n_z} \sum_{T: z\in T} v_h|_T(z)
\]
for inner nodes $z \in \cN_h\setminus \p\o_h$ and the number $n_z$ of elements containing $z$.
The jumps of $v_h$ are for sides $S\in \cS_h$ defined by 
\[
\jump{v_h}(x) =  
\begin{cases}
v_h(x)  & \text{if } S\subset \p\o_h, \\
\lim_{\veps \to 0} v_h(x+\veps n_S) - v_h(x-\veps n_S) & \text{if } S \not \subset \p\o_h,
\end{cases}
\]
with fixed unit normal vectors $n_S$, $S\in \cS_h$. We then have that
\[
\|v_h - \cJ_h^\av v_h \|_{L^2(\o_h)}^2 \le c_k h^2  \sum_{S\in \cS_h} h_S^{-1} \|\jump{v_h}\|_{L^2(S)}^2,
\]
cf., e.g.,~\cite{ErnGue04}.
Averages of discontinuous functions $v_h$ are on sides $S\in \cS_h$ defined via
\[
\aver{v_h}(x) =  
\begin{cases}
v_h(x)  & \text{if } S\subset \p\o_h, \\
\lim_{\veps \to 0} \big(v_h(x+\veps n_S) + v_h(x-\veps n_S)\big)/2 & \text{if } S \not \subset \p\o_h.
\end{cases}
\]
We make repeated use of inverse estimates, e.g., 
\[
\|\nabla w_h\|_{L^2(T)} \le c_\inv h_T^{-1} \|w_h\|_{L^2(T)}
\]
for a polynomial function $w_h$ of bounded degree on an element $T\in \cT_h$
with diameter $h_T>0$. A scaled trace inequality asserts that for 
a side $S\in \cS_h$ and an adjacent element $T_S \in \cT_h$ we have
\[
c_\trace^{-1} \|w\|_{L^2(S)}^2 \le  h_{T_S}^{-1} \|w\|_{L^2(T_S)}^2 + h_{T_S} \|\nabla w\|_{L^2(T_S)}^2
\]
for all $w\in H^1(T)$. For polynomial functions the second term on the right-hand side
can be omitted at the expense of a larger constant~$c_\trace$. 


\section{Abstract results} \label{sec:abstr_res}
We restrict to the most relevant cases  $0\le \s<1$ and
consider the plate bending problem defined by the energy functional 
\[
I(v) = \frac{\s}{2} \int_\o |\Delta v|^2 \dv{x} + \frac{1-\s}{2} \int_\o |D^2 v|^2 \dv{x}
\]
on the space $V = H^2(\o) \cap H^1_0(\o)$. For a sequence of 
convex subdomains $\oh \subset \o$ and function spaces $V_h \subset L^2(\oh)$
the approximating problems are defined via the functionals
\[
I_h(v_h) = \frac{\s}{2} \int_\oh |\Delta_h v_h|^2 \dv{x} + \frac{1-\s}{2} \int_\oh |D_h^2 v_h|^2 \dv{x},
\]
with a linear operator $D_h^2:V_h \to L^2(\oh)$ that approximates the
Hessian in a sense specified below. The discrete Laplace operator $\Delta_h$ is assumed
to be given by the trace of $D_h^2$. We assign the value $+\infty$ 
to the functionals $I$ and $I_h$ for functions in $L^2(\o)$ not belonging
to $V$ and $V_h$, respectively. We always
extend functions and derivatives defined in $\oh$ trivially to functions defined in $\o$
and impose the following conditions on the approximating problems: 
\begin{itemize}
\item[(S1)] {\em Uniform $H^1_0$-coercivity:} There exists a family of 
operators $\cJ_h : V_h \to H^1_0(\oh)$ such that if $I_h(v_h)$ is 
bounded then $\cJ_hv_h$ is bounded in $H^1_0(\o)$ and $v_h-\cJ_h v_h \to 0$ in $L^2(\o)$. 
\item[(S2)] {\em Stability of $D_h^2$:} If $v_h \wto v$ in $L^2(\o)$ and $D_h^2 v_h \wto \psi$ in 
$L^2(\o)$ then we have that $v\in H^2(\o)$ with $D^2 v = \psi$. 
\item[(S3)] {\em Interpolation in $V_h$:} There exist linear operators
$\cI_h: H^3(\o)\cap H^1_0(\o) \to V_h$ with $D_h^2 \cI_h v \to D^2 v$ 
in $L^2(\o)$ for every $v\in H^3(\o)\cap H^1_0(\o)$.
\end{itemize}

\begin{proposition}[Sufficient conditions]\label{prop:gen_conv}
Assume that conditions~(S1)-(S3) are satisfied. 
We then have that $I_h \to I$ as $h\to 0$ in the sense of $\G$ convergence with respect
to strong convergence in $L^2(\o)$. 
\end{proposition}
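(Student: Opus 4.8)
The plan is to verify the two conditions in the definition of $\G$-convergence separately, using the computation sketched in the introduction for convex domains as a template but with the concrete objects there replaced by the abstract operators supplied by (S1)--(S3).

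First I would establish the lim--inf inequality. Given a sequence $v_h\to v$ in $L^2(\o)$, I would pass to a subsequence that realizes $\liminf_h I_h(v_h)$ and may be assumed finite, so that $v_h\in V_h$ for small $h$ and, since $0\le\s<1$, the norms $\|D_h^2 v_h\|$ and $\|\Delta_h v_h\|$ stay bounded. By (S1) the functions $\cJ_h v_h$ are then bounded in $H^1_0(\o)$ with $v_h-\cJ_h v_h\to 0$ in $L^2(\o)$, hence $\cJ_h v_h\to v$ in $L^2(\o)$, which together with the $H^1_0$-bound forces $v\in H^1_0(\o)$. Extracting a further subsequence along which $D_h^2 v_h\wto\psi$ in $L^2(\o)$, condition (S2) gives $v\in H^2(\o)$ with $D^2 v=\psi$, so $v\in V$. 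Since $\Delta_h$ is the trace of $D_h^2$ one also has $\Delta_h v_h\wto\Delta v$, and weak lower semicontinuity of the $L^2$-norm, combined with $\s/2\ge 0$ and $(1-\s)/2>0$, yields $I(v)\le\liminf_h I_h(v_h)$.

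Next I would construct a recovery sequence, first for $v\in H^3(\o)\cap H^1_0(\o)$ by setting $v_h=\cI_h v\in V_h$. By (S3) one has $D_h^2 v_h\to D^2 v$ in $L^2(\o)$, and hence $\Delta_h v_h=\trace D_h^2 v_h\to\Delta v$ in $L^2(\o)$, so that $I_h(v_h)\to I(v)$ once it is known that $v_h\to v$ in $L^2(\o)$. To obtain that convergence I would use that $I_h(v_h)$ is bounded, so by (S1) the sequence $\cJ_h v_h$ is bounded in $H^1_0(\o)$; by the Rellich theorem every subsequence has a sub-subsequence converging in $L^2(\o)$ to some $w\in H^1_0(\o)$, and along it $v_h\to w$, hence $v_h\wto w$, while $D_h^2 v_h\wto D^2 v$, so (S2) forces $w\in H^2(\o)$ with $D^2 w=D^2 v$. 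Since $w-v\in H^1_0(\o)$ is then an affine function and $\o$ is bounded and convex, $w=v$, and as this limit is independent of the chosen subsequence we obtain $\cJ_h v_h\to v$, hence $v_h\to v$, in $L^2(\o)$. For a general $v\in V$ I would invoke the density Theorem~\ref{thm:density} to pick $v^{(k)}\in H^3(\o)\cap H^1_0(\o)$ with $v^{(k)}\to v$ in $H^2(\o)$, use the continuity of the quadratic functional $I$ on $V$ to get $I(v^{(k)})\to I(v)$ and $v^{(k)}\to v$ in $L^2(\o)$, and combine the corresponding recovery sequences by a standard diagonal argument. Finally, if $v\in L^2(\o)\setminus V$ then $I(v)=+\infty$ and the lim--inf inequality already proved forces $I_h(v_h)\to+\infty$ for every sequence $v_h\to v$, so e.g.\ $v_h=v|_{\oh}$ is admissible.

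The step I expect to be the main obstacle is the verification that the interpolants $\cI_h v$ converge to $v$ in $L^2(\o)$: this is not part of (S3) and has to be recovered from (S1) and (S2) together with the affine--rigidity argument above. Once this is available, the lim--inf estimate, the diagonalization and the case $I(v)=+\infty$ are routine, and the asserted $\G$-convergence of $I_h$ to $I$ follows.
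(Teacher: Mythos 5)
Your proof is correct and follows essentially the same route as the paper's: (S1)--(S2) plus weak lower semicontinuity for the lim--inf bound, and (S3) combined with the density Theorem~\ref{thm:density} and a diagonal argument for the recovery sequence. The one place you go beyond the paper is in verifying that $\cI_h v\to v$ in $L^2(\o)$ (which the definition of $\G$-convergence does require of a recovery sequence but which (S3) does not directly supply); the paper's proof takes this for granted, whereas your Rellich-plus-affine-rigidity argument derives it from (S1)--(S2), legitimately closing a minor gap in the abstract setting.
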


\begin{proof}
(i) We first establish the lim-inf inequality. For this, let $(v_h)_{h>0} \subset L^2(\o)$
be a sequence of functions $v_h\in V_h$ with $v_h \to v$ and, without loss of generality,
$I_h(v_h) \le c$. By~(S1) we then have, after extension by zero,
that $(\cJ_h v_h)_{h>0}$, and $(D_h^2 v_h)_{h>0}$ are bounded sequences 
in $H^1_0(\o)$ and $L^2(\o)$ with weak limits $v \in H^1_0(\o)$, and 
$\psi \in L^2(\o;\R^{2\times 2})$ for a suitable subsequence. The assumed consistency 
properties in~(S1) and~(S2) yield that $\psi = D^2 v$ and $v_h \to v$. In 
particular, we have that $v\in V$, and by weak lower semicontinuity of quadratic
functionals we find that
\[
I(v) \le \liminf_{h\to 0} I_h(v_h).
\]
(ii) To verify a lim-sup inequality we choose $v\in V$. By 
density of functions belonging to $V \cap H^3(\o)$ in the set $V$ stated in 
Theorem~\ref{thm:density} and continuity of $I$, we may assume that $v\in H^3(\o)$. 
Condition~(S3) then guarantees that for $v_h = \cI_h v \in V_h$ we have 
$D_h^2 v_h \to D^2 v$ in $L^2(\o)$ and hence $I_h(v_h) \to I(v)$ as $h \to 0$. 
\end{proof}

\begin{remarks}
(i) If $V_h \subset C(\oo_h)$ then one may choose the $P1$ nodal interpolation
operator $\cI_h^{p1}: C(\oo_h) \to \cS^1_0(\cT_h)$ in condition~(S1) 
provided that $v_h(z)=0$ 
for every $v_h\in V_h$ and $z\in \cN_h\cap \p\o$. For discontinuous methods
node averaging or quasiinterpolation operators can be employed. The 
coercivity is then a discrete version of the Poincar\'e estimate~\eqref{eq:poincare_h2}. \\
(ii) Condition~(S2) is independent of boundary approximations and can
be checked in subdomains compactly contained in $\o$. \\
(iii) The interpolation condition~(S3) requires the discrete boundary condition
to be compatible with the interpolation of restricted functions $v|_{\oh}$ whenever
$v\in H^3(\o)\cap H^1_0(\o)$. In particular, conditions $v_h(z)=0$ can
only be imposed at vertices $z\in \cN_h$ belonging to the boundary of $\o$
and not, e.g., at midpoints of edges. \\
(iv) Our approach of imposing the boundary condition only in the corner points of
$\o_h$ coincides with the methods devised in~\cite{Rann79}. One may
incorporate conditions $\nabla u_h(c_i) \cdot t_i= 0$ with the exact tangents
$t_i$ in corner points $c_i$, $i=0,1,\dots,n_m$, as devised in~\cite{StrFix73}
to improve the accuracy, although
this may lead to difficulties in guaranteeing stability of the discrete 
problems. 
\end{remarks}

For discontinuous Galerkin methods the discretization of the functional~$I$
typically involves stabilizing terms defined by positive semidefinite bilinear forms
$s_h: V_h \times V_h\to \R$. The discrete funtionals are then given by
\[
I_{h,\stab}(v_h) = I_h(v_h) + \frac12 s_h(v_h,v_h).
\]
In this case an additional condition is needed: 
\begin{itemize}
\item[(S4)] {\em Stabilization:} For every $v\in H^3(\o)\cap H^1_0(\o)$ and the
sequence $v_h =\cI_hv$ with $\cI_h$ from (S3) we have $s_h(v_h,v_h) \to 0$ as $h\to 0$. 
\end{itemize}

\begin{proposition}[Stabilization]\label{prop:stabilization}
Assume that conditions~(S1)-(S4) are satisfied. 
We then have that $I_{h,\stab} \to I$ as $h\to 0$ in the sense of $\G$ convergence with respect
to strong convergence in $L^2(\o)$.
\end{proposition}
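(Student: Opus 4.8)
The plan is to reduce Proposition~\ref{prop:stabilization} to Proposition~\ref{prop:gen_conv} by treating the stabilization term as a lower-order perturbation. Since $s_h$ is positive semidefinite, we have $I_h(v_h) \le I_{h,\stab}(v_h)$, so whenever $I_{h,\stab}(v_h)$ is bounded, so is $I_h(v_h)$, and conditions~(S1) and~(S2) apply verbatim.

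For the lim-inf inequality, let $v_h \to v$ in $L^2(\o)$ with, without loss of generality, $I_{h,\stab}(v_h) \le c$. Then $I_h(v_h) \le c$ as well, so the argument from part~(i) of the proof of Proposition~\ref{prop:gen_conv} gives $v \in V$ and
\[
I(v) \le \liminf_{h\to 0} I_h(v_h) \le \liminf_{h\to 0} \big(I_h(v_h) + \tfrac12 s_h(v_h,v_h)\big) = \liminf_{h\to 0} I_{h,\stab}(v_h),
\]
where the middle inequality uses $s_h(v_h,v_h) \ge 0$. For the lim-sup inequality, fix $v \in V$; by Theorem~\ref{thm:density} and continuity of $I$ it suffices to treat $v \in H^3(\o) \cap H^1_0(\o)$. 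Choosing the recovery sequence $v_h = \cI_h v$ from~(S3), part~(ii) of the proof of Proposition~\ref{prop:gen_conv} yields $D_h^2 v_h \to D^2 v$ in $L^2(\o)$, hence $I_h(v_h) \to I(v)$, while~(S4) gives $s_h(v_h,v_h) \to 0$. Therefore
\[
I_{h,\stab}(v_h) = I_h(v_h) + \tfrac12 s_h(v_h,v_h) \to I(v),
\]
which is the required limit. Combining the two parts establishes $\G$-convergence with respect to strong $L^2(\o)$-convergence.

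There is essentially no obstacle here: the proof is a routine adaptation once one observes that positive semidefiniteness makes the stabilization harmless from below and that~(S4) is tailored precisely to kill it along the recovery sequence. The only point requiring a little care is the density reduction in the lim-sup step, namely that passing from general $v \in V$ to $v \in H^3(\o) \cap H^1_0(\o)$ is legitimate; this follows from continuity of $I$ on $V$ together with a diagonal sequence argument, exactly as in Proposition~\ref{prop:gen_conv}, since $I_{h,\stab}$ and $I$ agree in the limit on the dense subset.
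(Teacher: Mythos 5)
Your proposal is correct and follows exactly the paper's own (much terser) argument: positive semidefiniteness of $s_h$ preserves the lim-inf inequality from Proposition~\ref{prop:gen_conv}, and condition~(S4) ensures the recovery sequence from~(S3) still works for $I_{h,\stab}$. No further comment is needed.
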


\begin{proof}
Since $s_h(v_h,v_h) \ge 0$ the first part of the proof of Proposition~\ref{prop:gen_conv}
remains valid. The second part also holds by noting that condition~(S4) applies
to the sequence used in the proof. 
\end{proof}

Fully conforming methods fail to converge so that an inconsistency
in the approximation of the differential operator or the boundary conditions
is necessary. 

\begin{proposition}[Necessary condition]\label{prop:failure}
Assume that $\p\o$ contains a curved part, that $V_h\subset H^2(\oh)\cap H^1_0(\oh)$,
and $D_h^2 = D^2$ for all $h>0$. Then the functionals $I_h$, $h>0$, are 
not $\G$-convergent to $I$ as $h\to 0$.
\end{proposition}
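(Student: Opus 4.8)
The plan is to derive a contradiction with the lim-sup inequality of $\Gamma$-convergence by exhibiting a limit function whose energy is strictly smaller than the value that any recovery sequence can achieve. First I would suppose, for contradiction, that $I_h \to I$ in the $\Gamma$-sense. Since the functionals $I_h$ are uniformly coercive (the Poincar\'e inequality~\eqref{eq:poincare_h2} applies on each $\oh$ with a uniform constant because $\oh \subset \o$ and the domains have uniformly bounded inner and outer diameters), the minimizers $u_h$ of $I_h$ over $V_h = H^2(\oh)\cap H^1_0(\oh)$ would converge in $L^2(\o)$ to the minimizer $u$ of $I$ over $V$. But by the operator-splitting argument recalled in the introduction, the $u_h$ (with a forcing term, or more robustly: the minimizers of $I_h$ perturbed by a fixed nonzero linear term $-\int_{\oh} f v$) converge to the solution of $\Delta^2 u_\infty = f$, $u_\infty = \Delta u_\infty = 0$ on $\p\o$, which is the ``wrong'' solution and differs from $u$ whenever $\s$ enters the true boundary condition and $\p\o$ is curved. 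To keep the statement self-contained, though, I would instead argue directly at the level of the lim-sup inequality without reference to a load term.

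The cleaner route: fix a function $v \in V = H^2(\o)\cap H^1_0(\o)$ for which $\int_{\p\o}\k (\p_n v)^2\,\dd s > 0$; such a $v$ exists precisely because $\p\o$ has a curved part (take any smooth $v$ vanishing on $\p\o$ with $\p_n v$ not identically zero on the curved portion, e.g. a suitable multiple of the squared distance function to $\p\o$ near that portion). The $\Gamma$-limit's lim-sup inequality demands a recovery sequence $v_h \in V_h$ with $v_h \to v$ in $L^2(\o)$ and $I_h(v_h) \to I(v)$. Now I invoke the boundary representation Lemma~\ref{la:bdy_repr} and the identity $|D^2 w|^2 = (\Delta w)^2 - 2\det D^2 w$: for the original domain this gives the representation~\eqref{eq:fl_curv_term},
\[
I(v) = \frac12 \int_\o |\Delta v|^2\,\dd x + \frac{1-\s}{2}\int_{\p\o}\k(\p_n v)^2\,\dd s,
\]
whereas for each polygonal $\oh$ the corresponding curvature term vanishes (corners contribute nothing because $\p_n = 0$ there for $H^1_0(\oh)$-functions, and $\k \equiv 0$ on the straight edges), so~\eqref{eq:fl_no_curv_term} yields
\[
I_h(v_h) = \frac12\int_{\oh}|\Delta_h v_h|^2\,\dd x \ge \frac12 \int_{\oh} \bigl(\tfrac{1}{\sqrt 2}\,|\Delta_h v_h|\bigr)^2 \cdot 2\,\dd x,
\]
but more to the point $I_h(v_h) = \tfrac12\|\Delta v_h\|_{L^2(\oh)}^2$ and, since $\det D^2 v_h$ integrates to zero on each polygon, also $I_h(v_h) = \tfrac12\|D^2 v_h\|_{L^2(\oh)}^2$ with the full-Hessian norm.

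The contradiction now comes from a lower-semicontinuity mismatch. Along the recovery sequence, $v_h \to v$ in $L^2(\o)$ and $I_h(v_h)$ is bounded, so by (S1)-type coercivity $v_h$ is bounded in $H^1_0(\o)$ and $D^2 v_h$ is bounded in $L^2(\o)$; passing to a subsequence, $D^2 v_h \wto D^2 v$ weakly in $L^2(\o)$, and weak lower semicontinuity of the convex functional $w \mapsto \tfrac12\|D^2 w\|_{L^2(\o)}^2$ gives
\[
\tfrac12\|D^2 v\|_{L^2(\o)}^2 \le \liminf_{h\to 0}\tfrac12\|D^2 v_h\|_{L^2(\oh)}^2 = \liminf_{h\to 0} I_h(v_h) = I(v).
\]
On the other hand, applying the elementary identity and Lemma~\ref{la:bdy_repr} to $v$ itself on $\o$,
\[
\tfrac12\|D^2 v\|_{L^2(\o)}^2 = \tfrac12\|\Delta v\|_{L^2(\o)}^2 + \tfrac12\int_{\p\o}\k(\p_n v)^2\,\dd s
= I(v) + \tfrac{\s}{2}\int_{\p\o}\k(\p_n v)^2\,\dd s.
\]
Since we chose $v$ with $\int_{\p\o}\k(\p_n v)^2\,\dd s > 0$, this would force $\tfrac12\|D^2 v\|_{L^2(\o)}^2 > I(v)$ when $\s>0$, contradicting the inequality above — so no recovery sequence can exist and $I_h \not\to I$. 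The case $\s = 0$ needs one extra turn: there $I(v) = \tfrac12\|D^2 v\|^2$, so the Hessian-norm comparison is an equality and gives no contradiction; instead I would compare Laplacians, using that $\|\Delta v_h\|_{L^2(\oh)}^2 = \|D^2 v_h\|_{L^2(\oh)}^2$ on polygons while $\|\Delta v\|_{L^2(\o)}^2 = \|D^2 v\|_{L^2(\o)}^2 - \int_{\p\o}\k(\p_n v)^2\,\dd s < \|D^2 v\|_{L^2(\o)}^2$, together with the fact that along a recovery sequence one must also have $\Delta v_h \wto \Delta v$ and hence $\|\Delta v\|^2 \le \liminf \|\Delta v_h\|_{L^2(\oh)}^2 = \liminf \|D^2 v_h\|_{L^2(\oh)}^2$; but $\liminf \|D^2 v_h\|^2 \ge \|D^2 v\|^2 > \|\Delta v\|^2$ need not immediately contradict $\liminf\|\Delta v_h\|^2 = I(v)\cdot 2 = \|D^2 v\|^2$...

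Let me reconsider: for $\s=0$ the recovery condition is $\tfrac12\|D^2 v_h\|_{L^2(\oh)}^2 \to \tfrac12\|D^2 v\|_{L^2(\o)}^2$, i.e. convergence of norms plus weak convergence gives $D^2 v_h \to D^2 v$ strongly in $L^2(\o)$ (extending by zero). But then also $\Delta v_h \to \Delta v$ strongly in $L^2(\o)$, so $\|\Delta v_h\|_{L^2(\oh)}^2 \to \|\Delta v\|_{L^2(\o)}^2$; yet on polygons $\|\Delta v_h\|_{L^2(\oh)}^2 = \|D^2 v_h\|_{L^2(\oh)}^2 \to \|D^2 v\|_{L^2(\o)}^2 = \|\Delta v\|_{L^2(\o)}^2 + \int_{\p\o}\k(\p_n v)^2\,\dd s$, contradicting the previous limit since the boundary integral is positive. \emph{This} is the argument I would write for $\s = 0$, and an analogous two-functional comparison covers $0 < \s < 1$ without needing the strict inequality to be delicate.

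\textbf{Main obstacle.} The routine parts — existence of a test function with positive weighted curvature integral, the algebra with Lemma~\ref{la:bdy_repr} — are easy. The subtle point is justifying that a recovery sequence really does produce weak (and, in the norm-convergence regime, strong) $L^2$-convergence of $D^2 v_h$ and of $\Delta v_h$ after trivial extension; this needs the uniform coercivity of the $I_h$ on $H^1_0(\oh)$ (to bootstrap from $L^2$-convergence of $v_h$ to $H^1$- and $H^2$-bounds) and the stability that a weak $L^2$-limit of $D^2 v_h$ on the exhausting domains $\oh$ is genuinely $D^2$ of the limit on $\o$ — exactly the content of conditions analogous to (S1)–(S2), which hold here because $D_h^2 = D^2$ and $V_h \subset H^2(\oh)\cap H^1_0(\oh)$. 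Handling the two regimes $\s = 0$ and $0 < \s < 1$ uniformly, via the simultaneous comparison of the $\|\Delta\cdot\|$ and $\|D^2\cdot\|$ quantities along the recovery sequence, is the one place where the write-up must be careful.
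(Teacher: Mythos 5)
Your argument is correct and follows essentially the same route as the paper's proof: both rest on Lemma~\ref{la:bdy_repr} and the representations \eqref{eq:fl_curv_term}--\eqref{eq:fl_no_curv_term}, deducing from energy convergence along a putative recovery sequence that the polygonal energies converge to $\tfrac12\|\Delta v\|^2$, which misses the positive curvature term in $I(v)$; your case split between $\s>0$ (weak lower semicontinuity alone suffices) and $\s=0$ (norm plus weak convergence gives strong convergence, then transfer to the Laplacian) is a cosmetic refinement of the paper's uniform ``$D^2 v_h\to D^2 v$ and $\Delta v_h\to\Delta v$ strongly'' step. One small slip in a parenthetical: the squared distance function has vanishing gradient on $\p\o$, so it cannot serve as the test function with $\int_{\p\o}\k(\p_n v)^2\dv{s}>0$; take instead, e.g., the torsion function solving $-\Delta v=1$, $v|_{\p\o}=0$, whose normal derivative is nonzero on all of $\p\o$.
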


\begin{proof}
Assume that $I_h(v_h) \to I(v)$ for some $v\in V$ and a
sequence $(v_h)_{h>0}$ with $v_h\in V_h$ and $v_h \to v$.
Since $I_h$ is quadratic in $D^2 v_h$, it then follows that $D^2 v_h \to D^2 v$  
and $\Delta v_h \to \Delta v$ in $L^2(\o)$. This however contradicts the
convergence of the representations~\eqref{eq:fl_no_curv_term} 
of $I_h$ to the representation~\eqref{eq:fl_curv_term} of $I$ with boundary integral 
terms provided by Lemma~\ref{la:bdy_repr} unless the boundary of 
$\o$ is piecewise straight or $\p_n v=0$ on $\p\o$. 
\end{proof}

The assumed convexity condition on $\o$ simplifies certain arguments but can be
avoided. 

\begin{remark}
If $\o$ is not convex then by choosing a sufficiently large domain $\widehat{\o}$ 
that contains the approximating domains $\o_h$ one establishes convergence in 
$H^1_0(\widehat{\o})$ and shows that limitting functions are supported in $\overline{\o}$.
\end{remark}

\section{Numerical methods}\label{sec:methods}
We apply the abstract results of the previous section to prototypical finite
element methods for fourth order problems. Throughout this section we assume
that the conditions (T1)-(T2) on the triangulations are satisfied. We always
extend functionals by the value $+\infty$ to the entire set $L^2(\o)$.

\subsection{A conforming method}\label{sec:conf_fe}
For a triangle $T\in \cT_h$ with vertices $z_0,z_1,z_2$ 
the Argyris element is given as the triple $(T,P_5(T),K_T)$ with the set of node 
functionals $K_T$ containing the
functionals $\chi_{i,\a}(v) = \p^\a v(z_i)$ for $i=0,1,2$, $\a\in \N_0^2$ with
$\a_1+\a_2\le 2$, and $\chi_{i,n}=\nabla v(x_{S_i})\cdot n_{S_i}$ associated with
the sides $S_i$, $i=0,1,2$, of $T$ with midpoints $x_{S_i}$ and outer normals $n_{S_i}$.
The element is illustrated in Figure~\ref{fig:elements} and  
leads to the space 
\[
\cS^{5,1}(\cT_h) = \{ v_h \in C^1(\oo_h): v_h|_T \in P_5(T) \mbox{ for all }T\in \cT_h\}.
\]
For subspaces $V_h$ of 
$\cS^{5,1}(\cT_h)$ containing boundary conditions we consider the minimization of 
\[
I_h^\arg(v_h) = \frac{\s}{2} \int_{\o_h} |\Delta v_h|^2 \dv{x} 
+ \frac{1-\s}{2} \int_{\o_h} |D^2 v_h|^2 \dv{x} 
\]
in the set of functions $v_h\in V_h$. 

\begin{proposition}[Failure of convergence]
Let $V_h^{\arg,o} = H^1_0(\o_h)\cap \cS^{5,1}(\cT_h)$ and assume that $\p\o$ contains
a curved part. Then the functionals $I_h^\arg:V_h^{\arg,o}\to \R$ are not $\G$-convergent 
to the functional $I$.
\end{proposition}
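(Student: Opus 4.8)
The plan is to read the statement as a direct instance of the abstract necessary condition, Proposition~\ref{prop:failure}, and to check its two structural hypotheses in the Argyris setting. That $D_h^2=D^2$ is immediate, since $I_h^{\arg}$ is written with the genuine Hessian $D^2 v_h$, which is a well-defined $L^2(\o_h)$-function because $v_h$ is elementwise a polynomial and globally $C^1$. The conformity $V_h^{\arg,o}\subset H^2(\o_h)\cap H^1_0(\o_h)$ I would obtain from the standard observation that for $v_h\in\cS^{5,1}(\cT_h)$ the gradient $\nabla v_h$ is globally continuous and elementwise polynomial, hence in $H^1(\o_h;\R^2)$, so that $v_h\in H^2(\o_h)$; the factor $H^1_0(\o_h)$ supplies the vanishing trace. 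Since $\p\o$ is assumed to contain a curved part, all hypotheses of Proposition~\ref{prop:failure} are met and its conclusion is exactly the assertion.

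For a self-contained derivation I would retrace the proof of Proposition~\ref{prop:failure} in this concrete case. Fix $v\in V$ with $\int_{\p\o}\k\,(\p_n v)^2\dv{s}>0$; such $v$ exist because convexity of $\o$ forces $\k\ge 0$ with $\k>0$ on a set of positive length along the curved arc, while a function in $H^2(\o)\cap H^1_0(\o)$ with non-vanishing normal trace there is readily built from a cutoff times the signed distance to $\p\o$. If $\G$-convergence held, condition (ii) in the definition would furnish $v_h\in V_h^{\arg,o}$ with $v_h\to v$ in $L^2(\o)$ and $I_h^{\arg}(v_h)\to I(v)$. Boundedness of $I_h^{\arg}(v_h)$ keeps $D^2 v_h$ bounded in $L^2(\o)$ after extension by zero, and on every subdomain compactly contained in $\o$ — eventually covered by the $\o_h$ — the convergence $v_h\to v$ identifies the weak limit of $D^2 v_h$ as $D^2 v$, so $D^2 v_h\wto D^2 v$ in $L^2(\o;\R^{2\times 2})$. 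Because $0\le\s<1$, the quadratic form $A\mapsto\frac{\s}{2}|\trace A|^2+\frac{1-\s}{2}|A|^2$ is positive definite on symmetric matrices, so $I_h^{\arg}(v_h)$ and $I(v)$ are squared Hilbert-space norms of $D^2 v_h$ and $D^2 v$; combining weak convergence with convergence of these norms upgrades to strong convergence $D^2 v_h\to D^2 v$ in $L^2(\o)$, and hence $\Delta v_h\to\Delta v$ in $L^2(\o)$.

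The contradiction then comes from comparing the two energy representations. Each $v_h$ lies in $H^2(\o_h)\cap H^1_0(\o_h)$ on the polygon $\o_h$, so \eqref{eq:fl_no_curv_term} applies and gives $I_h^{\arg}(v_h)=\frac12\int_{\o_h}|\Delta v_h|^2\dv{x}\to\frac12\int_{\o}|\Delta v|^2\dv{x}$; on the other hand \eqref{eq:fl_curv_term}, which follows from Lemma~\ref{la:bdy_repr}, gives $I(v)=\frac12\int_{\o}|\Delta v|^2\dv{x}+\frac{1-\s}{2}\int_{\p\o}\k\,(\p_n v)^2\dv{s}$. Matching the two limits forces $\int_{\p\o}\k\,(\p_n v)^2\dv{s}=0$, contradicting the choice of $v$. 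Hence no recovery sequence for this $v$ can exist and $\G$-convergence fails.

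I do not anticipate a genuine obstacle here: the whole argument is essentially a verbatim specialization of Proposition~\ref{prop:failure}, and the only point that deserves a line of justification is the $H^2$-conformity of the Argyris space, i.e. that global $C^1$-regularity of an elementwise polynomial across a polygonal triangulation already yields $H^2(\o_h)$-regularity.
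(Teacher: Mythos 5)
Your proposal is correct and matches the paper exactly: the paper's proof is the one-line observation that the claim is a direct consequence of Proposition~\ref{prop:failure}, whose hypotheses ($V_h^{\arg,o}\subset H^2(\o_h)\cap H^1_0(\o_h)$ and $D_h^2=D^2$) you verify, and your self-contained derivation is just an unfolding of that proposition's proof via Lemma~\ref{la:bdy_repr} and the representations \eqref{eq:fl_curv_term}--\eqref{eq:fl_no_curv_term}.
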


\begin{proof}
The result is a direct consequence of Proposition~\ref{prop:failure}.
\end{proof}

Reducing the discrete boundary condition to the vertices on the boundary
leads to correct convergence. 

\begin{proposition}[Correct convergence]\label{prop:conv_arg}
Let 
\[
V_h^\arg = \big\{ v_h \in \cS^{5,1}(\cT_h): v_h(z) = 0 \mbox{ for all }z\in \cN_h \cap \p\oh \big\}.
\]
Then the functionals $I_h^\arg: V_h^\arg\to \R$ are $\G$-convergent to $I$ as $h\to 0$.
\end{proposition}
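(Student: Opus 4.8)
The plan is to verify the three abstract conditions (S1)--(S3) of Proposition~\ref{prop:gen_conv} for the choice $V_h = V_h^\arg$, $D_h^2 = D^2$ (the genuine Hessian, since the Argyris space is $C^1$-conforming and hence $H^2$-conforming elementwise), and $\Delta_h = \Delta$. Once these are checked, the $\G$-convergence follows immediately from the proposition. Note that the failure argument of Proposition~\ref{prop:failure} does \emph{not} apply here, because $V_h^\arg$ is \emph{not} contained in $H^1_0(\o_h)$: the boundary condition is only imposed at the vertices $\cN_h\cap\p\o_h$, so the hypothesis $V_h\subset H^2(\o_h)\cap H^1_0(\o_h)$ of that proposition is violated.

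For~(S1), I would take $\cJ_h = \cI_h^\po$, the $P1$ nodal interpolation operator restricted to $\o_h$. Since $v_h\in\cS^{5,1}(\cT_h)\subset C(\oo_h)$ and $v_h(z)=0$ for all $z\in\cN_h\cap\p\o_h$, the first remark after Proposition~\ref{prop:gen_conv} applies and $\cI_h^\po v_h\in\cS^1_0(\cT_h)\subset H^1_0(\o_h)$. Boundedness in $H^1_0(\o)$ of $\cI_h^\po v_h$ when $I_h(v_h)$ is bounded follows by combining the interpolation estimate $\|v_h-\cI_h^\po v_h\|_{H^1(\o_h)}\le c_\cI h\|D^2 v_h\|_{L^2(\o_h)}$ with the discrete Poincar\'e inequality $\|\nabla\cI_h^\po v_h\|\le c_{\rm P}\|\Delta\cI_h^\po v_h\|$ on $H^1_0(\o_h)$ (valid with a constant uniform over the convex domains $\o_h$ with comparable diameters), and controlling $\|\Delta\cI_h^\po v_h\|$ by $\|\Delta v_h\|$ plus an inverse-estimate term applied to $v_h-\cI_h^\po v_h$; the bound $I_h(v_h)\le c$ gives $\|D^2 v_h\|_{L^2(\o_h)}\le c'$ since $0\le\s<1$, so $\|\Delta v_h\|\le\|D^2 v_h\|$ is controlled. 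Finally $v_h-\cJ_h v_h=v_h-\cI_h^\po v_h\to 0$ in $L^2(\o)$ directly from the interpolation estimate and $h\to 0$. Condition~(S2) is essentially automatic here: $D_h^2 = D^2$, so if $v_h\wto v$ and $D^2 v_h\wto\psi$ in $L^2(\o)$, then since distributional derivatives are weakly continuous on $L^2$, we get $D^2 v=\psi$ in the distributional sense and hence $v\in H^2(\o)$ with $D^2 v=\psi$.

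The interesting point is~(S3). Given $v\in H^3(\o)\cap H^1_0(\o)$, I would take $\cI_h v$ to be the Argyris interpolant of $v|_{\o_h}$ --- except one must be careful that the naive Argyris interpolant (using derivative values up to order two at vertices and normal derivatives at edge midpoints) need not lie in $V_h^\arg$, because it does not automatically vanish at boundary vertices... but in fact it does: at a vertex $z\in\cN_h\cap\p\o_h$ we have $z\in\p\o$ by~(T1), and $v\in H^1_0(\o)$; since $v\in H^3(\o)\subset C(\oo)$ the trace is the pointwise value, so $v(z)=0$. Hence the standard Argyris interpolant of $v|_{\o_h}$ already satisfies the defining constraint of $V_h^\arg$, and the classical Argyris interpolation estimate gives $\|D^2(v-\cI_h v)\|_{L^2(\o_h)}\le c h\|D^3 v\|_{L^2(\o_h)}\to 0$, so $D_h^2\cI_h v = D^2\cI_h v\to D^2 v$ in $L^2(\o)$ (using the trivial-extension convention and $|\o\setminus\o_h|\to 0$ so that $\|D^2 v\|_{L^2(\o\setminus\o_h)}\to 0$). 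The one technical subtlety worth flagging is that the Argyris interpolant is well-defined on $H^3$ (point values of second derivatives and of normal derivatives at midpoints are continuous functionals on $H^3(T)$ in two dimensions by Sobolev embedding), and that the interpolation constant stays bounded under the shape-regularity assumption; both are standard but should be cited. The main obstacle, then, is not any single step but rather assembling~(S1) cleanly --- making sure the chain "energy bound $\Rightarrow$ $H^2$ bound $\Rightarrow$ $P1$-interpolant bounded in $H^1_0$" uses only constants uniform in $h$, which is where the shape-regularity and uniform-convexity hypotheses enter.

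\begin{proof}
We verify conditions~(S1)--(S3) of Proposition~\ref{prop:gen_conv} with $V_h = V_h^\arg$, $D_h^2 = D^2$, and $\Delta_h = \Delta$; since $\cS^{5,1}(\cT_h)\subset C^1(\oo_h)\subset H^2(\o_h)$, these operators are well defined on $V_h$ and $I_h^\arg = I_h$.

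Condition~(S1). Let $\cJ_h = \cI_h^\po|_{\o_h}$ be the $P1$ nodal interpolation operator. For $v_h\in V_h^\arg$ we have $v_h\in C(\oo_h)$ with $v_h(z)=0$ for all $z\in\cN_h\cap\p\o_h$, hence $\cI_h^\po v_h\in\cS^1_0(\cT_h)\subset H^1_0(\o_h)$. Suppose $I_h^\arg(v_h)\le c$. Since $0\le\s<1$ this yields $\|D^2 v_h\|_{L^2(\o_h)}\le c_1$, and thus $\|\Delta v_h\|_{L^2(\o_h)}\le c_1$. By the interpolation estimate and an elementwise inverse estimate,
\[
\|\Delta(v_h-\cI_h^\po v_h)\|_{L^2(\o_h)} \le c_\inv h^{-1}\|\nabla(v_h-\cI_h^\po v_h)\|_{L^2(\o_h)} \le c_\inv c_\cI \|D^2 v_h\|_{L^2(\o_h)},
\]
so $\|\Delta\cI_h^\po v_h\|_{L^2(\o_h)}\le c_2$. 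The discrete Poincar\'e inequality on $H^1_0(\o_h)$, with a constant uniform over the convex domains $\o_h$ of comparable diameter, gives $\|\nabla\cI_h^\po v_h\|_{L^2(\o_h)}\le c_{\rm P} c_2$. After trivial extension by zero, $(\cI_h^\po v_h)_{h>0}$ is therefore bounded in $H^1_0(\o)$. Finally $\|v_h-\cJ_h v_h\|_{L^2(\o)} = \|v_h-\cI_h^\po v_h\|_{L^2(\o_h)}\le c_\cI h\,\|D^2 v_h\|_{L^2(\o_h)}\to 0$ as $h\to 0$.

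Condition~(S2). Here $D_h^2 = D^2$. If $v_h\wto v$ in $L^2(\o)$ and $D^2 v_h\wto\psi$ in $L^2(\o;\R^{2\times2})$, then for every test matrix field $\Phi\in C_c^\infty(\o;\R^{2\times2})$ we have $\int_\o \psi:\Phi = \lim_h\int_\o D^2 v_h:\Phi = \lim_h\int_\o v_h\,(D^2:\Phi) = \int_\o v\,(D^2:\Phi)$, so $D^2 v=\psi$ in the sense of distributions and hence $v\in H^2(\o)$ with $D^2 v=\psi$.

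Condition~(S3). Given $v\in H^3(\o)\cap H^1_0(\o)$, let $\cI_h v\in\cS^{5,1}(\cT_h)$ be the Argyris interpolant of $v|_{\o_h}$; its node functionals (values of $\p^\a v$, $|\a|\le2$, at vertices and of $\p_n v$ at edge midpoints) are continuous on $H^3(T)$ by the Sobolev embedding $H^3(T)\hookrightarrow C^2(T)$ in two dimensions, so $\cI_h v$ is well defined. At a boundary vertex $z\in\cN_h\cap\p\o_h$ we have $z\in\p\o$ by~(T1), and since $v\in H^1_0(\o)\cap C(\oo)$ this gives $v(z)=0$; hence $\cI_h v\in V_h^\arg$. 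The standard Argyris interpolation estimate under the shape-regularity assumption yields
\[
\|D^2(v-\cI_h v)\|_{L^2(\o_h)} \le c_A\, h\, \|D^3 v\|_{L^2(\o_h)},
\]
with $c_A$ bounded as $h\to 0$. Extending by zero and using $|\o\setminus\o_h|\to0$, so that $\|D^2 v\|_{L^2(\o\setminus\o_h)}\to 0$, we obtain $D_h^2\cI_h v = D^2\cI_h v\to D^2 v$ in $L^2(\o)$.

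Conditions~(S1)--(S3) being satisfied, Proposition~\ref{prop:gen_conv} gives that $I_h^\arg\to I$ as $h\to0$ in the sense of $\G$ convergence with respect to strong convergence in $L^2(\o)$.
\end{proof}
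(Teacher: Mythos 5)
Your overall strategy --- verifying (S1)--(S3) and invoking Proposition~\ref{prop:gen_conv} --- is exactly the paper's, but two of the three verifications contain errors, and one of them is the central technical point of this proposition. In~(S3) you assert that the standard Argyris node functionals are continuous on $H^3(T)$ ``by the Sobolev embedding $H^3(T)\hookrightarrow C^2(T)$ in two dimensions''. That embedding is false: in $\R^2$ one has $H^3(T)\hookrightarrow C^{1,\alpha}(\overline{T})$ for $\alpha<1$ but not $H^3(T)\hookrightarrow C^2(\overline{T})$, since the second derivatives of an $H^3$ function are only in $H^1$, which does not embed into $C^0$ in two dimensions. Hence the point values $\p^\a v(z_i)$ with $|\a|=2$ are not bounded functionals on $H^3(T)$, and the canonical Argyris interpolant is not well defined on $H^3(\o)\cap H^1_0(\o)$ --- which is precisely the class supplied by the density result (Theorem~\ref{thm:density}) that the recovery sequence must handle. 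The paper resolves this by modifying the interpolation operator: the second-derivative node values are replaced by the patch averages $\tchi_{i,\a}(p_T)=|\o_{z_i}|^{-1}\int_{\o_{z_i}}\p^\a v\dv{x}$, while the zeroth- and first-order functionals and the normal-derivative functionals (which \emph{are} continuous on $H^3$) are kept; the modified interpolant still lies in $V_h^\arg$ because $v(z)=0$ at boundary vertices, and the interpolation estimate follows from the Bramble--Hilbert lemma since quadratics are reproduced. Without this modification (or an equivalent regularization) your recovery sequence does not exist, so~(S3) is not established.

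There is also a flaw in your verification of~(S1). You invoke a ``discrete Poincar\'e inequality'' $\|\nabla\cI_h^\po v_h\|\le c_{\rm P}\|\Delta\cI_h^\po v_h\|$, but $\cI_h^\po v_h$ is piecewise affine: its elementwise Laplacian vanishes identically and it does not belong to $H^2(\o_h)$, so estimate~\eqref{eq:poincare_h2} neither applies nor yields anything useful (read literally with the elementwise Laplacian it would force $\nabla\cI_h^\po v_h=0$). The paper's argument instead integrates by parts against the genuinely $H^2$-regular function $v_h$, using $\cI_h^\po v_h\in H^1_0(\o_h)$:
\[
\|\nabla\cI_h^\po v_h\|^2=-\int_{\o_h}\cI_h^\po v_h\,\Delta v_h\dv{x}+\int_{\o_h}\nabla\cI_h^\po v_h\cdot\nabla(\cI_h^\po v_h-v_h)\dv{x},
\]
and then combines the first-order Poincar\'e inequality for $\cI_h^\po v_h$ with the nodal interpolation estimate to obtain $\|\nabla\cI_h^\po v_h\|\lesssim\|\Delta v_h\|+h\|D^2 v_h\|$, which is what (S1) requires. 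Your treatment of~(S2) and your observation that $\cI_h v$ vanishes at boundary vertices by (T1) and $v\in H^1_0(\o)\cap C(\oo)$ are correct and agree with the paper.
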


\begin{proof}
We verify the conditions of Proposition~\ref{prop:gen_conv} to deduce the result. \\
(i) Given $v_h\in V_h^\arg$, we have that $\cI_h^\po v_h \in \cS^1_0(\cT_h)$ 
so that an integration by parts leads to 
\[
\|\nabla  \cI_h^\po v_h\|^2 
= - \int_{\o_h} \cI_h^\po v_h \Delta v_h \dv{x} + 
\int_{\o_h} \nabla \cI_h^\po v_h \cdot \nabla (\cI_h^\po v_h - v_h) \dv{x}.
\]
A Poincar\'e inequality and a nodal interpolation estimate imply that
\[
\|\nabla \cI_h^\po v_h\| \lesssim \|\Delta v_h\| + h \|D^2 v_h\|,
\]
which guarantees the uniform coercivity property~(S1). \\
(ii) Since $D_h^2 = D^2$ the stability requirement~(S2) of the discrete
Hessian is trivially satisfied. \\
(iii) A modification of the canonical interpolation operator 
$\cI_h^\arg: H^4(\o) \to \cS^{5,1}(\cT_h)$ defined by the node functionals is required 
to interpolate functions in $H^3(\o)$. 
Given $v\in H^3(\o)\cap H^1_0(\o)$ we determine polynomials $p_T\in P_5(T)$ via the conditions 
$\chi_{i,\a} (p_T - v) = 0$, if $|\a|\le 1$, and $\chi_{i,n}(p_T-v) = 0$,  and 
\[
\tchi_{i,\a}(p_T) = \frac{1}{|\o_{z_i}|} \int_{\o_{z_i}} \p^\a v \dv{x},
\]
if $|\a|=2$ and for $i=0,1,2$, where $\o_{z_i}$ is the union of elements in $\cT_h$
that contain $z$. The modified interpolant $\tcI_h^\arg v \in V_h$ of $v$ is then 
defined via $(\tcI_h^\arg v) |_T = p_T$ for all $T\in \cT_h$. Owing to
$v|_{\p\o}=0$ we have that $v_h(z)=0$ for all $z\in \cN_h\cap \p\o_h$, so that
$\tcI_h^\arg v\in V_h^\arg$. Since $\tcI_h^\arg$
reproduces quadratic functions, an interpolation estimate follows from the Bramble--Hilbert
lemma, which implies condition~(S3). 
\end{proof}

\begin{figure}[htb]
\input{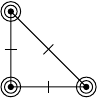_t} \hspace{1.4cm}
\input{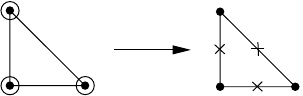_t}
\caption{\label{fig:elements} Schematical description of the Argyris element (left)
and the discrete Kirchhoff triangle with cubic and quadratic polynomial spaces (right).}
\end{figure}

\subsection{A nonconforming element}\label{sec:nconf_fe}
The discrete Kirchhoff triangle
uses $C^0$ conforming scalar and vectorial spaces $\cS^{3,\dkt}(\cT_h) \subset H^1(\o_h)$ and 
$\cS^2(\cT_h)^2 \subset H^1(\o_h;\R^2)$, respectively, and a discrete gradient operator 
\[
\nabla_h : \cS^{3,\dkt}(\cT_h) \to \cS^2(\cT_h)^2,
\]
cf. Figure~\ref{fig:elements}, that approximates the weak gradient.  
With this, we define the operators 
$D_h^2 v_h = \nabla \nabla_h v_h$ and $\Delta_h v_h = \diver \nabla_h v_h$ for $v_h\in  \cS^{3,\dkt}(\cT_h)$
and the discrete functionals  
\[
I_h^\dkt(v_h) = \frac{\s}{2} \int_{\o_h} |\Delta_h v_h|^2 \dv{x} 
+ \frac{1-\s}{2} \int_{\o_h} |D_h^2 v_h|^2 \dv{x},
\]
defined for $v_h \in V_h^\dkt$ with
\[
V_h^\dkt = \big\{v_h \in \cS^{3,\dkt}(\cT_h): v_h(z)=0 \text{ for all } z \in \cN_h\cap \p\o_h\big\}.
\]
Letting $\nabla_\cT$ and $D_\cT^2$ denote the elementwise application of the gradient and the Hessian,
we have the equivalence of the seminorms $\|D_\cT^2  v_h\|$ and $\|D_h^2 v_h\|$, 
and for all $v_h \in \cS^{3,\dkt}(\cT_h)$ and $v\in H^3(\o)$ the estimates, cf.~\cite{Brae07,Bart15-book},
\[\begin{split}
\|\nabla_h \cI_h^{3,\dkt} v -\nabla v\|
+ h \|D_h^2 \cI_h^{3,\dkt} v - D^2 v\| &\lesssim h^2 \|D^3 v\|, \\
\|\nabla_h v_h  - \nabla v_h\| &\lesssim h \|D_\cT^2 v_h\|.
\end{split}\]
The following proposition shows that imposing the boundary condition only in the
boundary nodes is sufficient to avoid incorrect convergence.

\begin{proposition}[Correct convergence]
Let 
\[
V_h^\dkt = \big\{ v_h \in \cS^{3,\dkt}(\cT_h): v_h(z) = 0 \mbox{ for all }z\in \cN_h \cap \p\o \big\}.
\]
Then the functionals $I_h^\dkt$ restricted to $V_h^\dkt$ are $\G$-convergent to $I$. 
\end{proposition}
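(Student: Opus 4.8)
The plan is to verify conditions (S1)--(S3) of Proposition~\ref{prop:gen_conv} for the discrete Kirchhoff triangle with the relaxed admissible space $V_h^\dkt$, so that the $\G$-convergence follows immediately. The three conditions are essentially decoupled, and two of them are routine; the only delicate point is the interpolation condition (S3), where one must interpolate merely $H^3$-regular functions (not $H^4$) into $V_h^\dkt$ while keeping the discrete Hessian convergent.

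First I would check the uniform $H^1_0$-coercivity (S1). The natural choice of the operator $\cJ_h:V_h^\dkt\to H^1_0(\o_h)$ is the $P1$ nodal interpolation $\cI_h^\po$, which is well defined since $\cS^{3,\dkt}(\cT_h)\subset C(\oo_h)$ and maps $V_h^\dkt$ into $\cS^1_0(\cT_h)$ because $v_h(z)=0$ at all boundary vertices $z\in\cN_h\cap\p\o$ (using (T1), $\cN_h\cap\p\o_h\subset\p\o$). Exactly as in the proof of Proposition~\ref{prop:conv_arg}(i), an integration by parts, the Poincar\'e inequality~\eqref{eq:poincare_h2}, and the nodal interpolation estimate give $\|\nabla\cI_h^\po v_h\|\lesssim\|\Delta_h v_h\|+h\|D_\cT^2 v_h\|$, where I use the seminorm equivalence $\|D_\cT^2 v_h\|\simeq\|D_h^2 v_h\|$ and the estimate $\|\nabla_h v_h-\nabla v_h\|\lesssim h\|D_\cT^2 v_h\|$ stated above to pass between $\nabla v_h$ and $\nabla_h v_h$ in the integration-by-parts identity. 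Since bounded $I_h^\dkt(v_h)$ controls $\|D_h^2 v_h\|$, the sequence $\cI_h^\po v_h$ is bounded in $H^1_0(\o)$; and $\|v_h-\cI_h^\po v_h\|_{L^2(\o_h)}\lesssim h\|D_\cT^2 v_h\|\to 0$ gives the required convergence $v_h-\cJ_h v_h\to 0$ in $L^2(\o)$.

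Next, (S2): if $v_h\wto v$ and $D_h^2 v_h=\nabla\nabla_h v_h\wto\psi$ in $L^2(\o)$, I would argue locally on a fixed subdomain $\o'\Subset\o$, which eventually lies inside $\o_h$. The estimate $\|\nabla_h v_h-\nabla v_h\|\lesssim h\|D_\cT^2 v_h\|$, combined with boundedness of $\|D_\cT^2 v_h\|\simeq\|D_h^2 v_h\|$, shows $\nabla_h v_h-\nabla_\cT v_h\to 0$ in $L^2$; testing $\nabla\nabla_h v_h$ against smooth compactly supported matrix fields and integrating by parts, the $\nabla_h v_h$ can be replaced by $v_h$ up to vanishing terms, identifying $\psi=D^2 v$ in the distributional sense on $\o'$, hence $v\in H^2(\o)$ with $D^2 v=\psi$. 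This is the boundary-approximation-independent argument anticipated in Remark~(ii) after Proposition~\ref{prop:gen_conv}.

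The main obstacle is (S3). For $v\in H^3(\o)\cap H^1_0(\o)$ I would take $v_h=\cI_h^{3,\dkt}v$, the canonical discrete Kirchhoff interpolant; the first cited estimate gives $\|\nabla_h\cI_h^{3,\dkt}v-\nabla v\|+h\|D_h^2\cI_h^{3,\dkt}v-D^2 v\|\lesssim h^2\|D^3 v\|$, so that $D_h^2 v_h\to D^2 v$ in $L^2(\o\cap\o_h)$; since $\|D^2 v\|_{L^2(\o\setminus\o_h)}\to 0$ by $|\o\setminus\o_h|\le ch^2$, the trivial extension yields $D_h^2 v_h\to D^2 v$ in $L^2(\o)$. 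The point requiring care is that $v_h\in V_h^\dkt$, i.e. $v_h(z)=0$ for all $z\in\cN_h\cap\p\o$: since $v\in H^1_0(\o)$ has a continuous representative (being also $H^3$) with $v|_{\p\o}=0$, and the $C^0$ interpolant $\cI_h^{3,\dkt}$ matches nodal values at vertices, we indeed get $v_h(z)=v(z)=0$ at boundary vertices, which all lie on $\p\o$ by (T1). One must also confirm that the cited interpolation estimate is valid for $v\in H^3$ only (not $H^4$); if the canonical $\cI_h^{3,\dkt}$ requires more regularity through its degrees of freedom, I would instead use the averaging modification as in Proposition~\ref{prop:conv_arg}(iii) — replacing any second-derivative point values by local averages of $D^2 v$ over element patches — and invoke the Bramble--Hilbert lemma using that the modified operator still reproduces quadratics, which suffices for the required $O(h)$ convergence of $D_h^2 v_h$. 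With (S1)--(S3) in hand, Proposition~\ref{prop:gen_conv} delivers the $\G$-convergence of $I_h^\dkt$ on $V_h^\dkt$ to $I$, and (since the functionals are uniformly coercive) also convergence of discrete minimizers.
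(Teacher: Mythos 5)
Your proposal is correct and follows essentially the same route as the paper: it verifies (S1) via the $P1$ nodal interpolant, integration by parts against the conforming field $\nabla_h v_h$, and the estimate $\|\nabla_h v_h-\nabla v_h\|\lesssim h\|D_\cT^2 v_h\|$; (S2) by testing $D_h^2 v_h=\nabla\nabla_h v_h$ against compactly supported fields; and (S3) with the canonical discrete Kirchhoff interpolant, whose stated $H^3$ interpolation estimate makes your fallback averaging modification unnecessary. The only additions beyond the paper's argument are harmless extra details (the $\|D^2v\|_{L^2(\o\setminus\o_h)}\to 0$ step and the explicit check that boundary nodal values vanish).
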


\begin{proof}
We verify the conditions of Proposition~\ref{prop:gen_conv} to deduce the result. \\
(i) As in the proof of Proposition~\ref{prop:conv_arg} we find that
\[
\|\nabla \cI_h^\po v_h \|^2 = - \int_\oh \cI_h^\po v_h \cdot \Delta_h v_h \dv{x} + 
\int_\oh \nabla \cI_h^\po v_h \cdot (\nabla \cI_h^\po v_h - \nabla_h v_h )  \dv{x},
\]
which implies the condition~(S1) after application of a Poincar\'e inequality, i.e.,  
\[
\|\nabla \cI_h^\po v_h \| \lesssim \|\Delta_h v_h\| + h \|D_\cT^2 v_h\|.
\]
(ii) Let $(v_h)_{h>0}$ be a sequence of functions $v_h\in V_h$ 
with $v_h \to v$ in $L^2(\o)$ as $h\to 0$. For every compactly supported
function $\phi \in C^\infty_0(\o;\R^{2\times 2})$ we then find with the approximation
properties of $\nabla_h$ that 
\[
\int_\o D_h^2 v_h : \phi \dv{x} = -\int_\o \nabla_h v_h \cdot \Diver \phi \dv{x} 
\to -\int_\o \nabla v \cdot \Diver \phi \dv{x}
\]
as $h \to 0$, and hence $v\in H^2(\o)$ with $D^2 v = \lim_{h\to 0} D_h^2 v_h$, which
provides condition~(S2). \\
(iii) The canonical nodal interpolation operator 
$\cI_h^\dkt : H^2(\o_h)\to  \cS^{3,\dkt}(\cT_h)$ defined by $\cI_h^\dkt v(z) = v(z)$ 
and $\nabla \cI_h^\dkt v(z) = \nabla v(z)$ has the features needed to guarantee~(S3). 
\end{proof}

\begin{remark}
By following the arguments of this subsection, convergence of discretizations based 
on the Morley element can be established, cf., e.g.,~\cite{Rann79b}.
\end{remark}

\subsection{A dG method}\label{sec:dg_fe}
We consider a simple discontinuous Galerkin method that does not fit exactly
into the general framework provided by Proposition~\ref{prop:gen_conv}
but the convergence analysis only requires minor modifications. The method
uses the space 
\[
V_h^\dg = \cL^\ell(\cT_h)
\]
of elementwise polynomials of degree at most $\ell \ge 0$ and the functionals 
\[
I_h^\dg(v_h) = \frac12 a_h(v_h,v_h) + \frac12 s_h(v_h,v_h).
\]
Here, $a_h$ encodes a discretization of the weak form of the differential
operator defined by $I$ and $s_h$ are stabilizing bilinear forms. For 
simplicity, we consider the case $\s=0$ so that the elastic energy is
a multiple of the integral of the squared norm of the Hessian. 
In the following and in contrast to the previous subsection, the symbols
\[
\nabla_h, \ D_h^2, \ \Delta_h
\]
denote here the elementwise application of the indicated differential operators.
A bilinear form resulting from an elementwise integration by
parts and a consistent symmetrization is given by 
\[\begin{split}
a_h(v_h,w_h) 
&= (D_h^2 v_h,D_h^2 w_h) \\
&\quad + (\aver{\pnnh v_h},\jj{\nh w_h})_{\cSh\setminus \p\o_h} + (\aver{\pnnh w_h},\jj{\nh v_h})_{\cSh\setminus \p\o_h}  \\
&\quad - (\aver{\pnDh v_h},\jj{w_h})_\cSh - (\aver{\pnDh w_h},\jj{v_h})_\cSh .
\end{split}\] 
With factors $\g_0,\g_1>0$ and the length function $h_\cS|_S = h_S$ of sides, 
the stabilizing bilinear form is given by 
\[\begin{split}
s_h(v_h,w_h) &= \g_0 ( h_\cS^{-3} \jj{v_h},\jj{w_h})_\cSh \\
&\quad + \g_1  ( h_\cS^{-1} \jj{\nh v_h}, \jj{\nh w_h})_{\cSh\setminus \p\o_h}.
\end{split}\]
The convergence analysis of the functionals makes repeated use of the scaled
trace inequality in the form $\|h_\cS^{1/2} v_h\|_{L^2(\cSh)} \lesssim \|v_h\|$
for every $v_h\in V_h^\dg$.

{\em Equicoercivity in $H^1_0(\o)$.}  We define the discrete norm
\[
\|v_h\|_\dg^2 = \|D_h^2 v_h\|^2 + s_h(v_h,v_h)
\]
and note that with trace inequalities and inverse estimates 
one obtains for $\g_0,\g_1>0$ sufficiently large that there exists $\a>0$ with
\[
a_h(v_h,v_h) \ge \a \|v_h\|_\dg^2.
\]
Given $v_h\in V_h^\dg$ we consider $\cJ_h^\av v_h\in H^1_0(\o_h)$ and note via
elementwise integration by parts, H\"older, Poincar\'e, and trace 
inequalities, that
\[\begin{split}
\|\nabla & \cJ_h^\av v_h\|^2 
= \int_\oh \nh v_h \cdot \nabla \cJ_h^\av v_h \dv{x} 
  + \int_\oh \nh (\cJ_h^\av v_h - v_h) \cdot \nabla \cJ_h^\av v_h \dv{x} \\
&\lesssim \big(\|\Delta v_h\|  + \|h_\cS^{-1/2} [\nh v_h]\|_{\cSh} 
+ \|\nh (\cJ_h^\av v_h -v_h)\| \big) \| \nabla \cJ_h^\av v_h\| .
\end{split}\]
Incorporating the estimate for the node averaging operator, we deduce that
\[
\|\nabla \cJ_h^\av v_h\| \lesssim  \|\Delta v_h\| + \|h_\cS^{-1/2} \jj{\nh v_h}\|_{\cSh} 
+ \|h_\cS^{-1/2} \jj{v_h}\| \lesssim \|v_h\|_\dg.
\]
This establishes the uniform coercivity in $H^1_0(\o)$. 

{\em Interpolation and stabilization in $V_h$.}
Given a function $v\in H^3(\o)\cap H^1_0(\o)$ we consider the quadratic Lagrange
interpolant $v_h = \cI_h^{2,0} v \in V_h^\dg \cap C(\overline{\o}_h)$, assuming that $V_h$ contains
quadratic polynomials, i.e., $\ell \ge 2$. For every $T\in \cT_h$ we have for
$r=0,1,2$ that
\[
\|D^r (v_h-v)\|_{L^2(T)} \lesssim h_T^{3-r} \|D^3 v\|_{L^2(T)}.
\]
In combination with the trace inequality one obtains that
\[
\|h_\cS^{-1/2} \jj{\nh v_h}\|_{L^2(\cS_h\setminus \p\oh)} 
= \|h_\cS^{-1/2} \jj{\nh (v_h-v) }\|_{L^2(\cS_h\setminus \p\oh)}  \lesssim h \|D^3 v\|.
\]
On inner sides we have $\jj{v_h}=0$ while for the union of boundary sides $S\subset \p\oh$ we have
\[
\|\jj{v_h}\|_{L^2(\p\o_h)} \lesssim \|v\|_{L^\infty(\p\o_h)} \lesssim h^2 \|\nabla v\|_{L^\infty(\o)},
\]
where we used that $v|_{\p\o}=0$ and for every $x\in S$ there exists $x'\in \p\o$ with 
$|x-x'|\le ch^2$. Finally, we note that
\[\begin{split}
(\jj{\pnnh v_h} ,\jj{\nh v_h})_S  &\le \|D_h^2 v_h\|_S \|\jj{\nh v_h}\|_S \\
&\lesssim h_S^{-1/2} \|D_h^2 v_h\|_{L^2(T_S)} h_S^{3/2} \|D^3 v \|_{L^2(T_S)} \\
&\lesssim h_S \|D^3 v\|_{L^2(T_S)} \|D_h^2v_h\|_{L^2(T_S)}.
\end{split}\]
Altogether, sums of squared terms related to sides in $a_h$ vanish as $h\to 0$, 
we have $s_h(v_h,v_h)\to 0$, and in particular for $h\to 0$ that
\[
I_h^\dg(v_h) \to  I(v).
\]

{\em Hessian stability.} To establish a liminf inequality, we follow the arguments of~\cite{BoNoNt21}.
Therein, a discrete Hessian  matrix $H_h(v_h)$ is constructed for every $v_h\in V_h$ via a
lifting operation, which satisfies the relation 
\[\begin{split}
(H_h(v_h),D_h^2 w_h) & = (D_h^2 v_h,D_h^2 w_h) \\ 
& \quad - (\jj{\nh v_h},\aver{\pnnh w_h})_\cSh + ( \jj{v_h},\aver{\pnDh w_h})_\cSh,
\end{split}\]
and obeys the inequality
\[
\| H_h(v_h)\| \le a_h(v_h,v_h).
\]
Moreover, if $(v_h)_{h>0}$ is a sequence of functions $v_h\in V_h$ such that 
$a_h(v_h,v_h)$ is uniformly bounded and $v_h\to v$ in $L^2(\o)$, 
then there exists $v\in H^2(\o)$ with 
\[
(H_h(v_h),\phi) \to (D^2 v ,\phi)
\]
for all $\phi \in C_0^\infty(\o;\R^{2\times 2})$. This convergence is established 
in~\cite{BoNoNt21} for a fixed domain that is accurately triangulated. However, 
since test functions are compactly supported
the result carries over to the case of approximated domains. 

\medskip

The equicoercivity in $H^1_0(\o)$, the interpolation property, and the Hessian consistency
imply the following $\G$ convergence result. 

\begin{proposition}[Correct convergence]
Let $V_h^\dg$ contain the set of elementwise quadratic, globally continuous polynomials,
and assume that $\g_0,\g_1>0$ are sufficiently large.
Then the functionals $I_h^\dg:V_h^\dg\to \R$ are $\G$-convergent to $I$. 
\end{proposition}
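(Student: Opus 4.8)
The plan is to verify the conditions of the general framework, noting that the dG method does not fit \emph{verbatim} into Proposition~\ref{prop:gen_conv} because $a_h$ is not simply $\int_{\o_h}|D_h^2 v_h|^2$ and because the stabilization is built into $I_h^\dg$ rather than added separately; nevertheless the same three-pronged strategy applies. The liminf inequality is handled via the discrete Hessian $H_h(v_h)$: given $v_h\to v$ in $L^2(\o)$ with $a_h(v_h,v_h)$ uniformly bounded (we may assume this, or the claim is trivial), the equicoercivity estimate shows $\|v_h\|_\dg$ is bounded, hence $\cJ_h^\av v_h$ is bounded in $H^1_0(\o)$ and $v_h-\cJ_h^\av v_h\to 0$ in $L^2(\o)$, so $v\in H^1_0(\o)$; the Hessian consistency gives $v\in H^2(\o)$ with $H_h(v_h)\wto D^2 v$ after passing to a subsequence; and since $I_h^\dg(v_h)\ge \frac12\|H_h(v_h)\|^2$ by the inequality $\|H_h(v_h)\|\le a_h(v_h,v_h)$ combined with coercivity (more precisely one uses $a_h(v_h,v_h)\ge\|H_h(v_h)\|$ together with weak lower semicontinuity of $\psi\mapsto\frac12\|\psi\|^2$), we conclude $I(v)=\frac12\|D^2 v\|^2\le\liminf_{h\to 0} I_h^\dg(v_h)$.

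For the limsup inequality I would invoke Theorem~\ref{thm:density} to reduce to $v\in H^3(\o)\cap H^1_0(\o)$ (using continuity of $I$), then take $v_h=\cI_h^{2,0}v$ as the recovery sequence. The ``Interpolation and stabilization'' calculations already assembled show $v_h\to v$ in $L^2(\o)$, that every side-related term in $a_h$ (the jump penalties $s_h(v_h,v_h)$ and the symmetrization couplings) tends to zero at rate $O(h)$ or better—crucially exploiting $v|_{\p\o}=0$ and $\dist(x,\p\o)\le ch^2$ for $x\in\p\o_h$ so that $\|\jump{v_h}\|_{L^2(\p\o_h)}\lesssim h^2$—while $\|D_h^2 v_h\|^2\to\|D^2 v\|^2$. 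Hence $a_h(v_h,v_h)\to\|D^2 v\|^2$ and $s_h(v_h,v_h)\to 0$, giving $I_h^\dg(v_h)\to I(v)$.

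The main obstacle is making the liminf step fully rigorous: one must check that the discrete Hessian inequality $\|H_h(v_h)\|\le a_h(v_h,v_h)$ together with the coercivity bound genuinely yields $\liminf a_h(v_h,v_h)\ge\frac12\|D^2 v\|^2\cdot 2$—i.e.\ that no energy is lost through the lifting—and that the weak convergence $H_h(v_h)\wto D^2 v$ is against the \emph{full} Hessian, not merely its trace. I would lean on the construction in~\cite{BoNoNt21}, remarking (as the text does) that since the test functions $\phi\in C_0^\infty(\o;\R^{2\times2})$ are compactly supported, the behaviour near $\p\o$ versus $\p\o_h$ is irrelevant and the cited convergence transfers to the approximate-domain setting without change. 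The remaining verification that $\g_0,\g_1$ sufficiently large makes $a_h$ coercive on $V_h^\dg$ in the $\|\cdot\|_\dg$ norm is the standard dG argument via trace and inverse estimates already indicated, and needs no domain-specific modification.
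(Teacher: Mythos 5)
Your proposal follows essentially the same route as the paper: the paper's ``proof'' is precisely the preceding discussion in Section~\ref{sec:dg_fe}, namely the equicoercivity estimate $\|\nabla\cJ_h^\av v_h\|\lesssim\|v_h\|_\dg$ for the liminf compactness, the lifted discrete Hessian $H_h(v_h)$ of~\cite{BoNoNt21} for the weak Hessian consistency, and the quadratic Lagrange interpolant of an $H^3$ function (after invoking Theorem~\ref{thm:density}) with the $O(h^2)$ boundary-jump estimate as recovery sequence. Your flagged concern about the inequality $\|H_h(v_h)\|\le a_h(v_h,v_h)$ versus its squared form is legitimate but is resolved exactly as you suggest, by weak lower semicontinuity of $\psi\mapsto\frac12\|\psi\|^2$ applied to $H_h(v_h)\wto D^2v$, which is also how the paper (implicitly, via the cited reference) closes the argument.
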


\begin{remark}
Local discontinuous Galerkin methods as discussed in~\cite{BGNY23}
replace the Hessian in the functional $I$ by the approximation $H_h$ and thereby fit into the abstract 
framework of Proposition~\ref{prop:gen_conv} for arbitrary choices of the parameters $\g_0,\g_1>0$.
\end{remark}

\section{Numerical experiments}\label{sec:num_ex}
We verify the theoretical results via numerical experiments for the setting
considered in~\cite{BabPit90}. All experiments were carried out using elementary
realizations in \textsc{Matlab} as in~\cite{Bart15-book,Bart16-book}. 

\begin{example}[Simple support on unit disk]\label{ex:bab_ex}
Let $\o = B_1(0)$ and $f=1$. Then the minimizer $u\in V=H^2(\o)\cap H^1_0(\o)$
for the functional $I$ satisfies $\Delta^2 u = f$, $u=\Delta u - (1-\s)\p_n u =0$
and is given by
\[
u(x) = \frac{(5+\s)-(6+2\s) |x|^2 + (1+\s)|x|^4}{64(1+\s)}.
\]
The solution obtained as a limit of an operator splitting on polygonal domains
solves $\Delta^2 u_\infty = f$ and $u_\infty = \Delta u_\infty =0$ in $\p\o$ and
is given by 
\[
u_\infty(x) = \frac{3}{64} - \frac{1}{16}|x|^2 + \frac{1}{64} |x|^4.
\]
We always set $\s=0$.
\end{example}

We test the methods analyzed above on triangulations that are obtained via
uniform refinements of triangulations of a square via the correction 
$z\mapsto \sqrt{2}|z|_\infty z/|z|_2 $, cf. Figure~\ref{fig:triangs}.  
Corresponding nodal interpolants of the functions $u$ and $u_\infty$ are shown in 
Figure~\ref{fig:sol_and_wrong}.

\begin{figure}[htb]
\includegraphics[width=.2\linewidth]{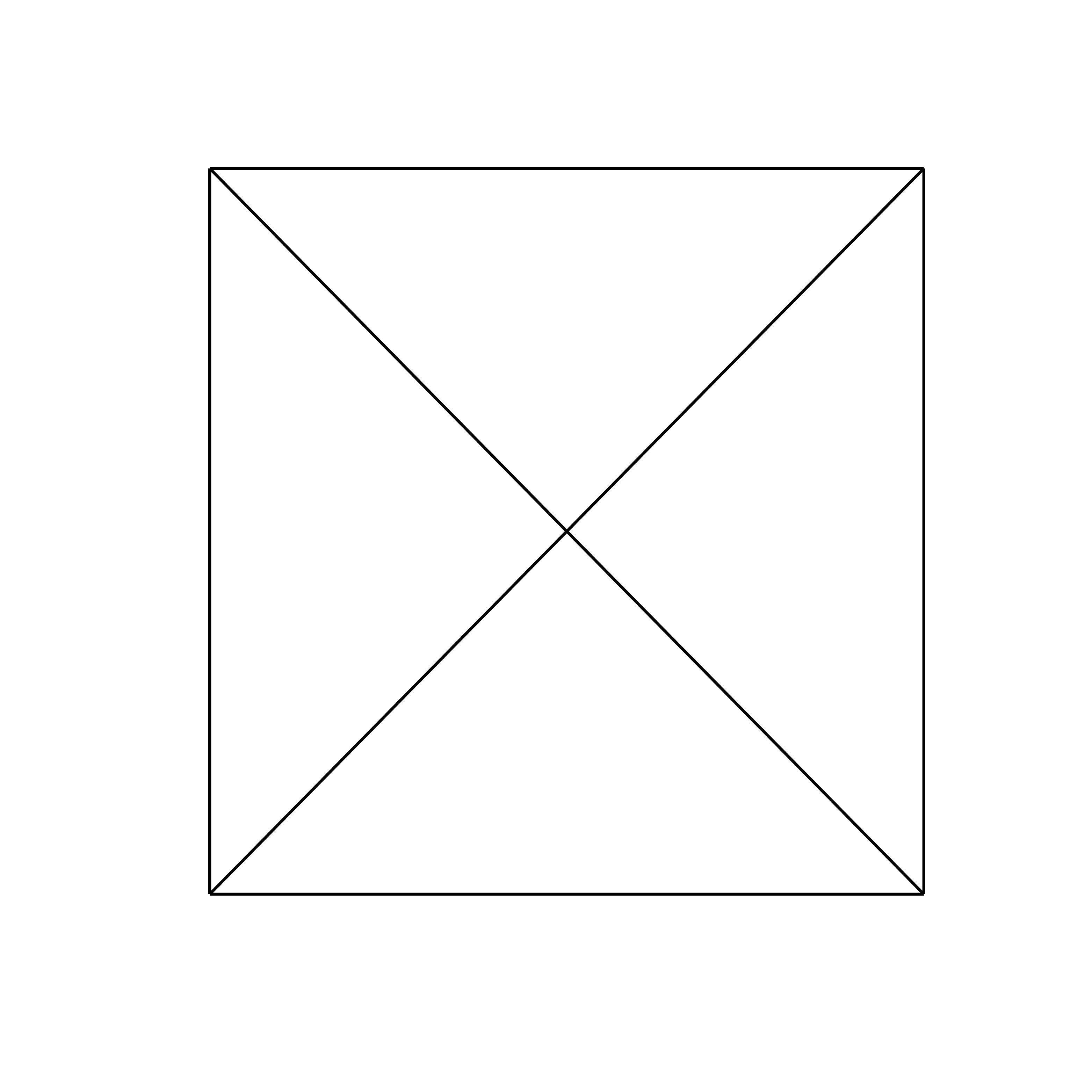} \hspace*{2mm}
\includegraphics[width=.2\linewidth]{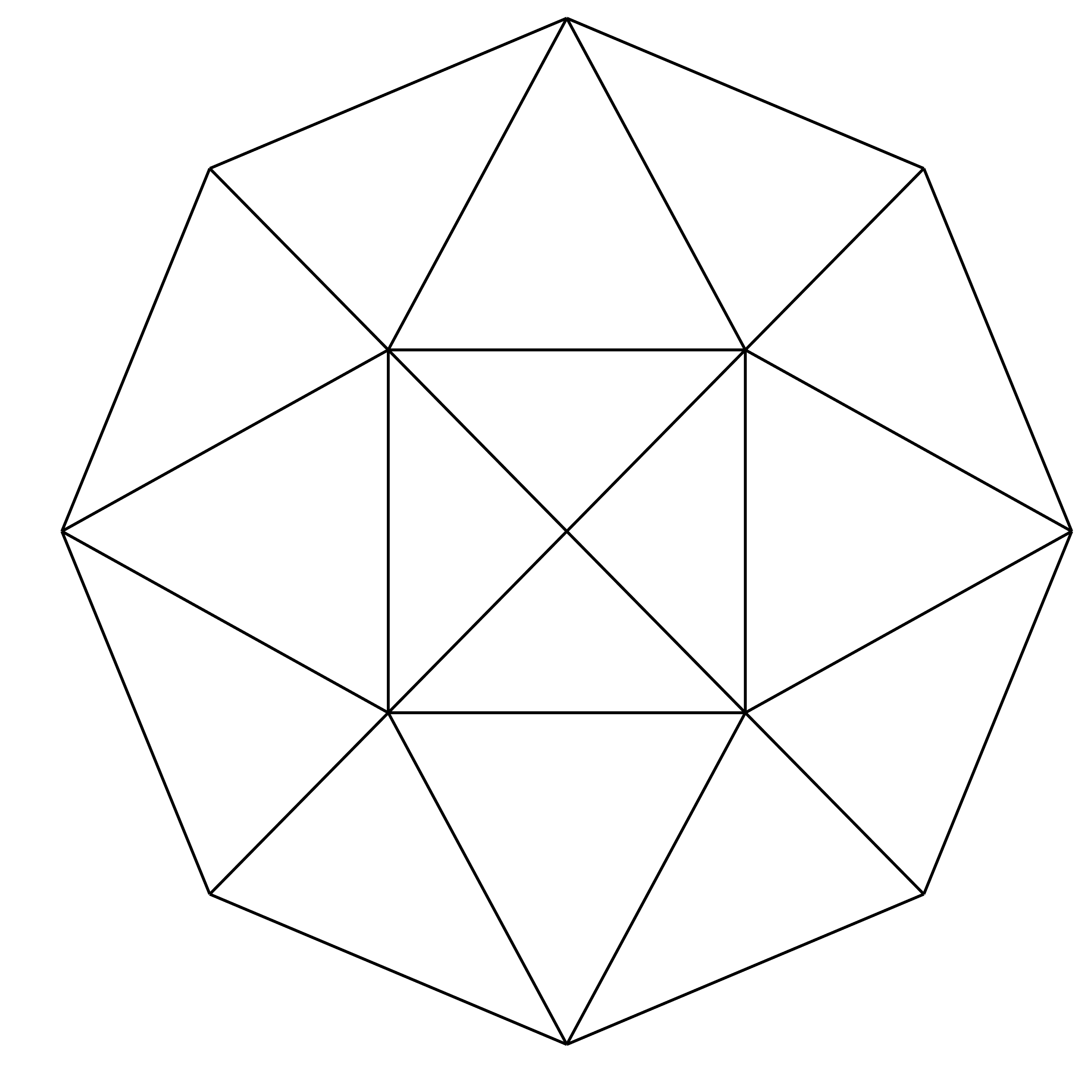} \hspace*{2mm}
\includegraphics[width=.2\linewidth]{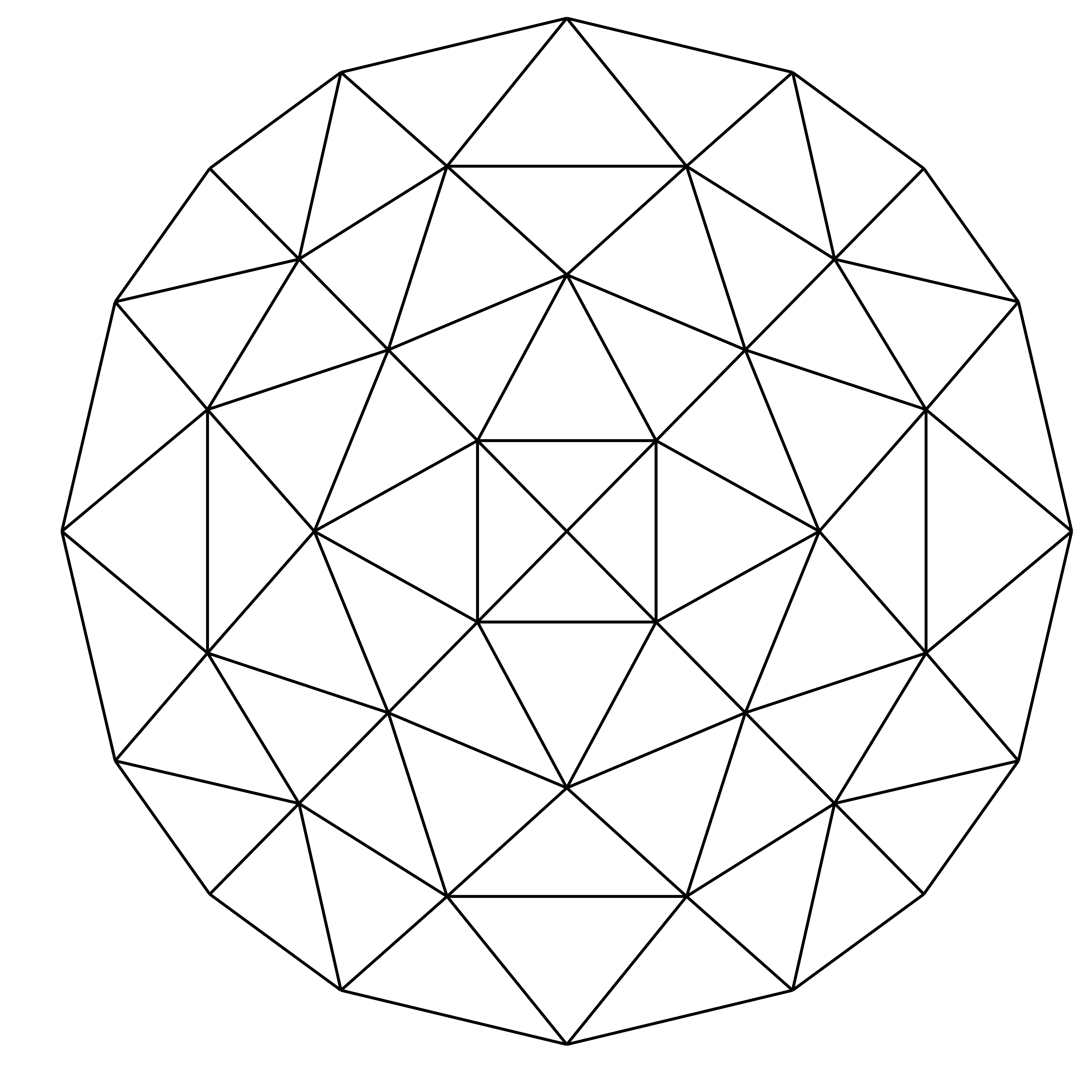} \hspace*{2mm}
\includegraphics[width=.2\linewidth]{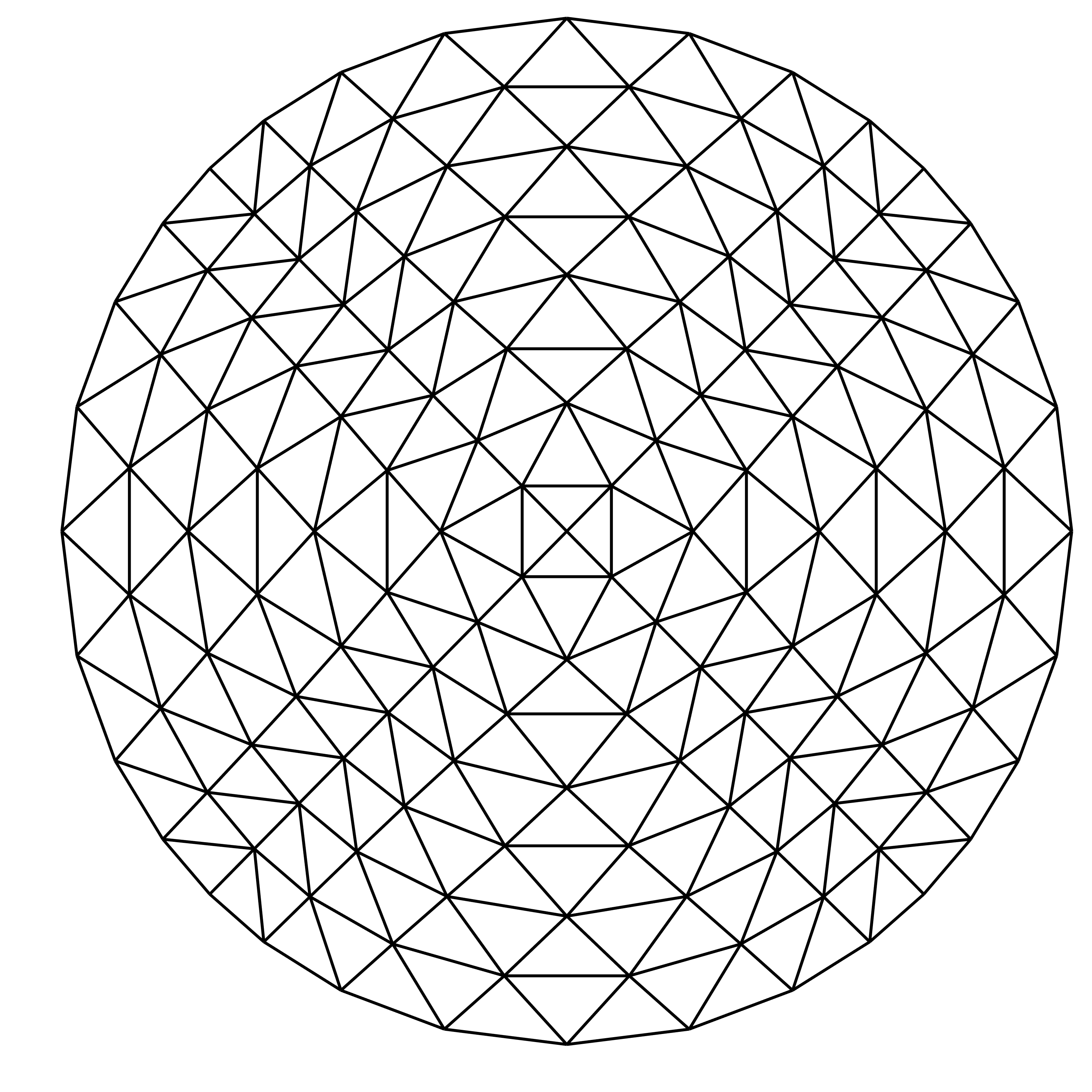}
\caption{\label{fig:triangs} Triangulations obtained from refinements and
subsequent corrections of triangulations of a square.}
\end{figure}

\subsection{Conforming method}
Figure~\ref{fig:argyris_full_vs_nodal} shows finite element solutions 
using the Argyris element for a full realization of the boundary 
conditions as well as their reduced, nodal treatment as introduced in
Section~\ref{sec:conf_fe}. We see that the
maximal values at the center of the disk differ substantially and
only the reduced treatment of the boundary conditions leads to correct
approximations. The effect of the reduced treatment is visualized in 
Figure~\ref{fig:reduced_bdy}, where we observe that small arcs form along
boundary sides that provide the right flexibility for the approximations
to attain the correct maximal values. 

\begin{figure}[htb]
\includegraphics[width=4cm]{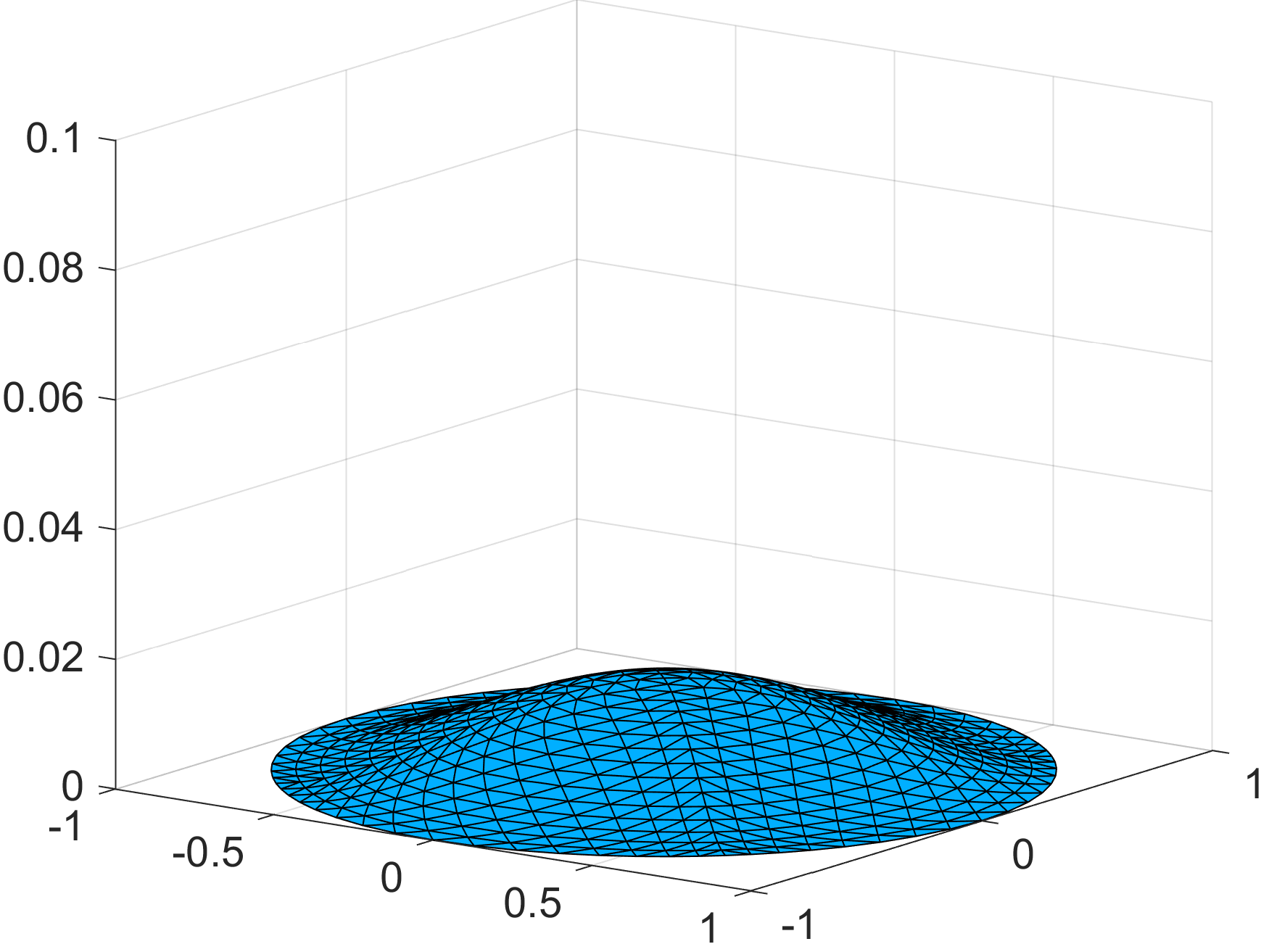} \hspace*{4mm}
\includegraphics[width=4cm]{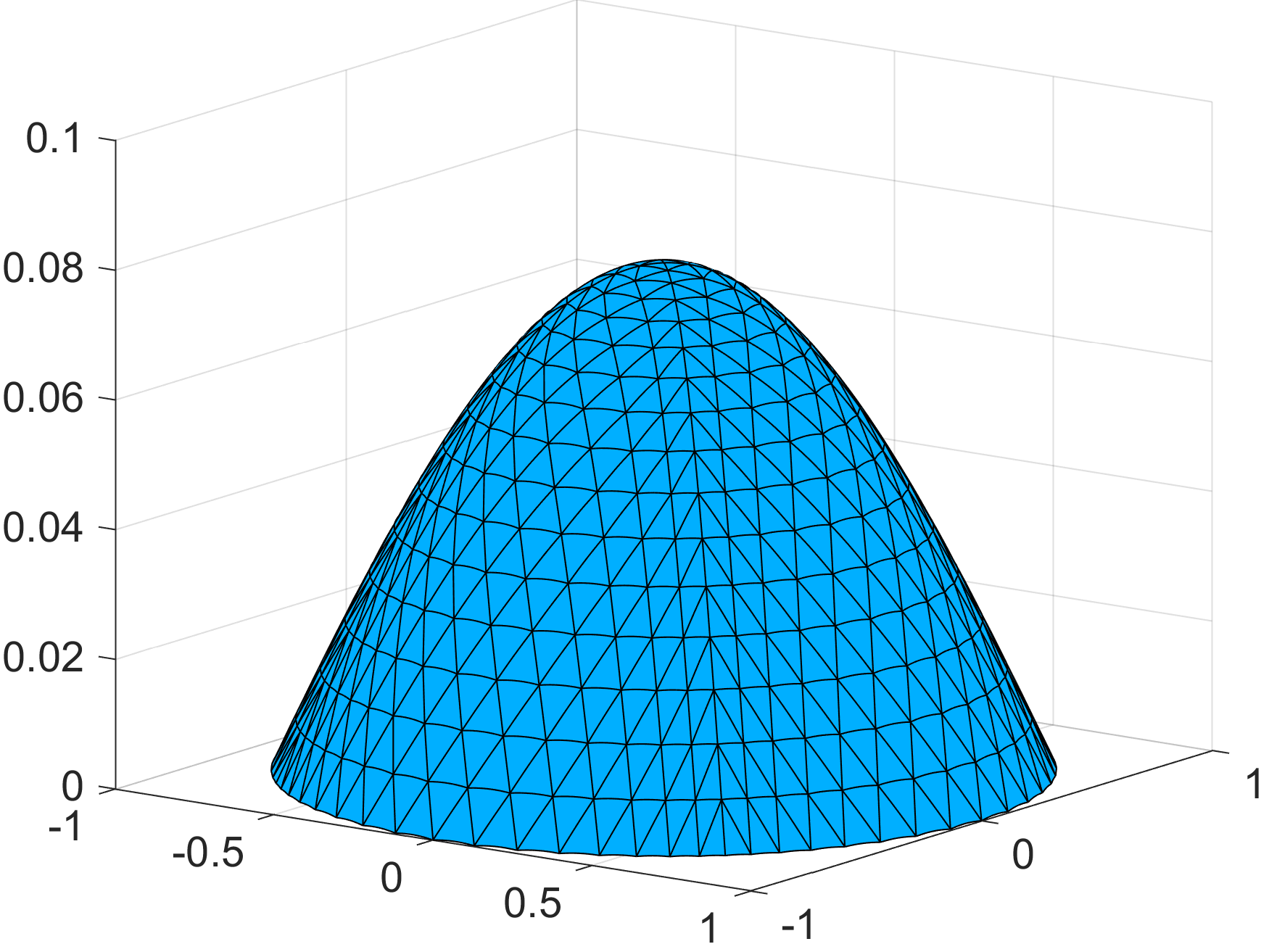} 
\caption{\label{fig:argyris_full_vs_nodal} Finite element approximations
obtained with the Argyris element and a full (left) and reduced (right)
realization of the simple support boundary conditions.}
\end{figure}

\begin{figure}[htb]
\includegraphics[width=4cm]{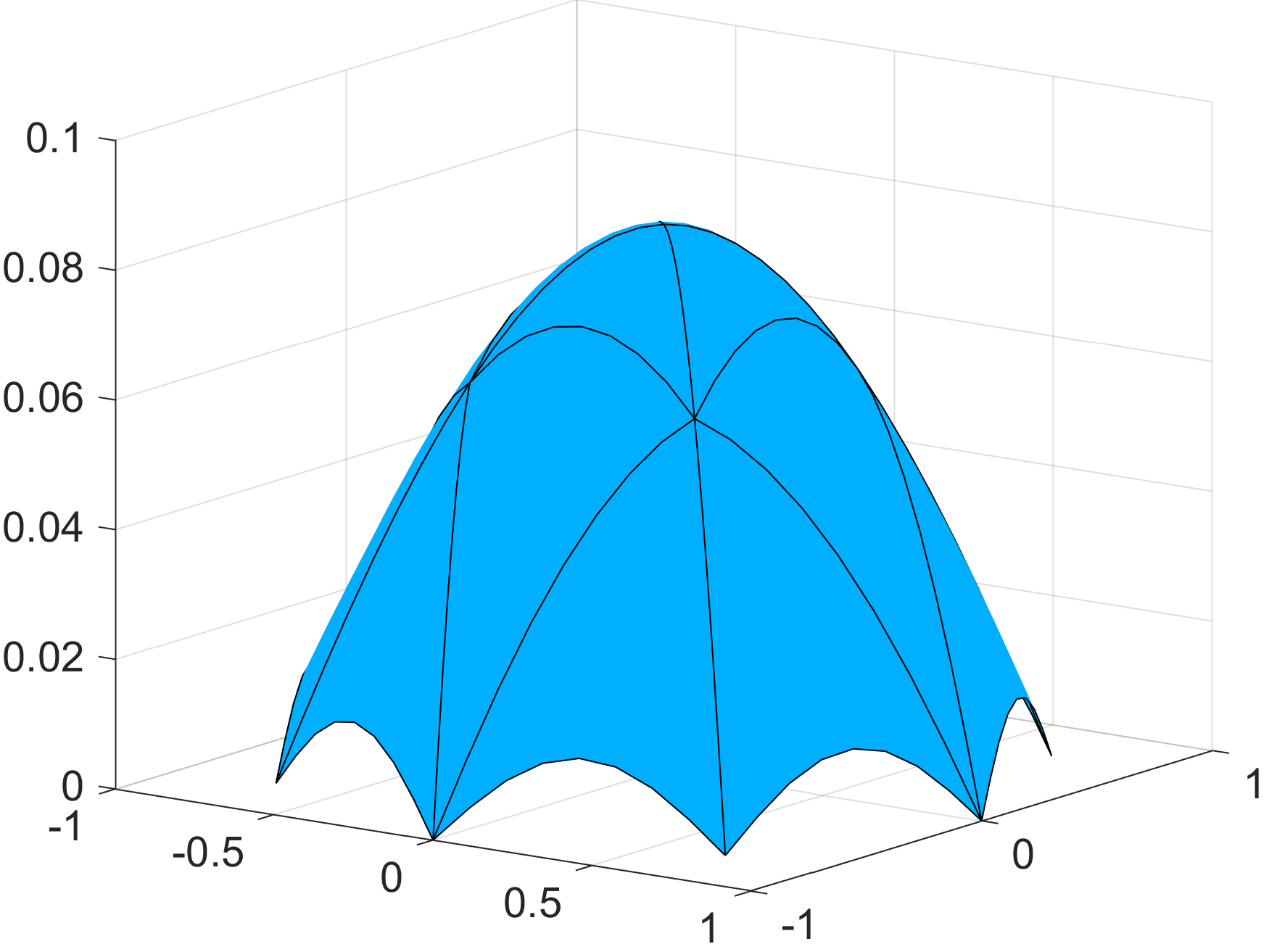} \hspace*{4mm}
\includegraphics[width=4cm]{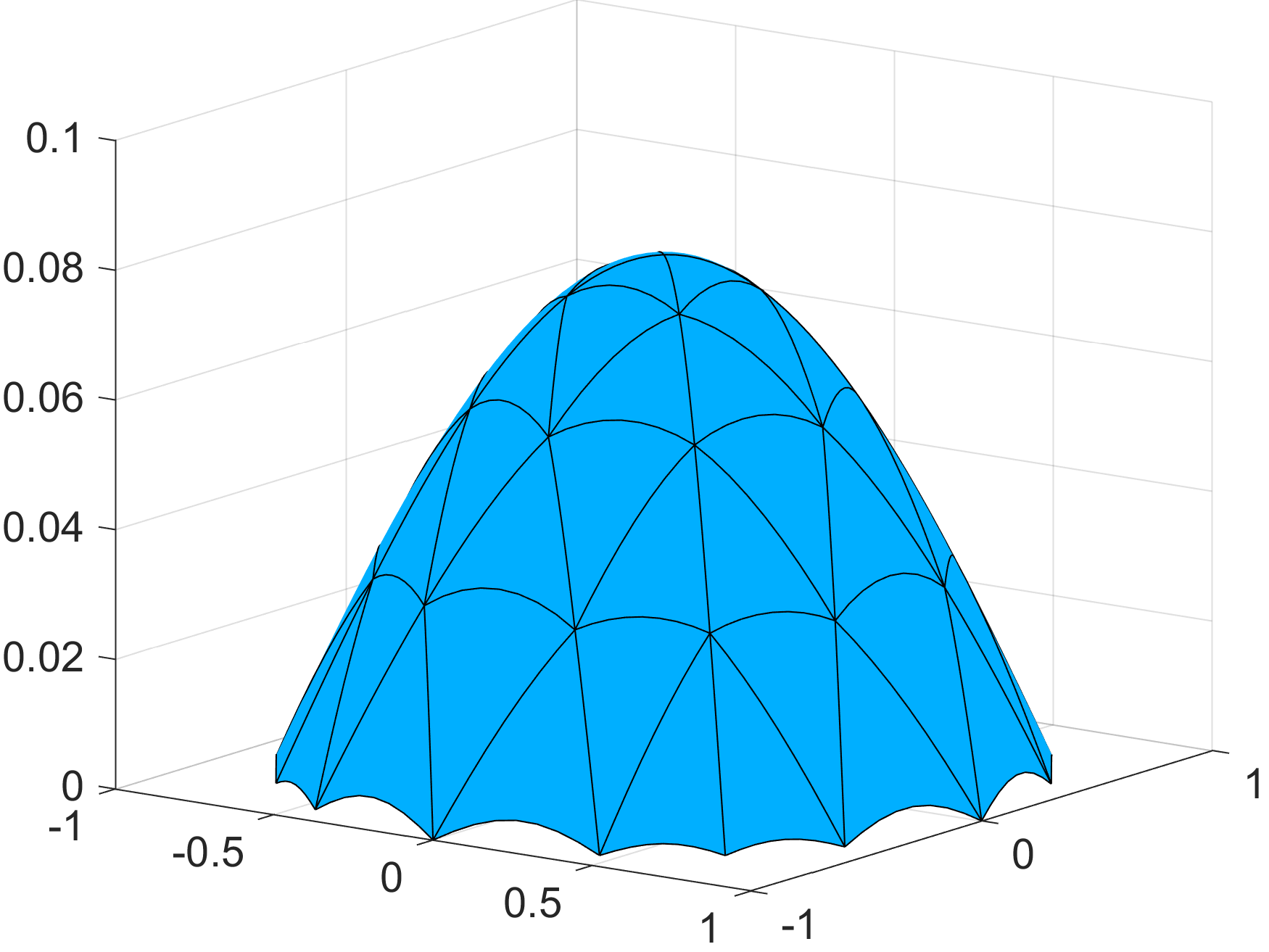} 
\caption{\label{fig:reduced_bdy} Coarse finite element approximations
obtained with the Argyris element and simple support boundary conditions
imposed in the corner points.}
\end{figure}

\subsection{Nonconforming method}
Approximating the solution of the model problem using the discrete Kirchhoff
element as in Section~\ref{sec:nconf_fe} leads to the approximation shown in the 
left plot of Figure~\ref{fig:dkt_and_dg}. The numerical solution accurately 
approximates the exact solution. 

\subsection{DG method}
The discontinuous Galerkin method analyzed in Section~\ref{sec:dg_fe} with
parameters $\g_0=\g_1=10$ provides the numerical solution shown in 
the right plot of Figure~\ref{fig:dkt_and_dg}. As in the case of the
nonconforming approximation, the maximal value accurately approximates the 
maximal value of the exact solution. 

\begin{figure}[htb]
\includegraphics[width=4cm]{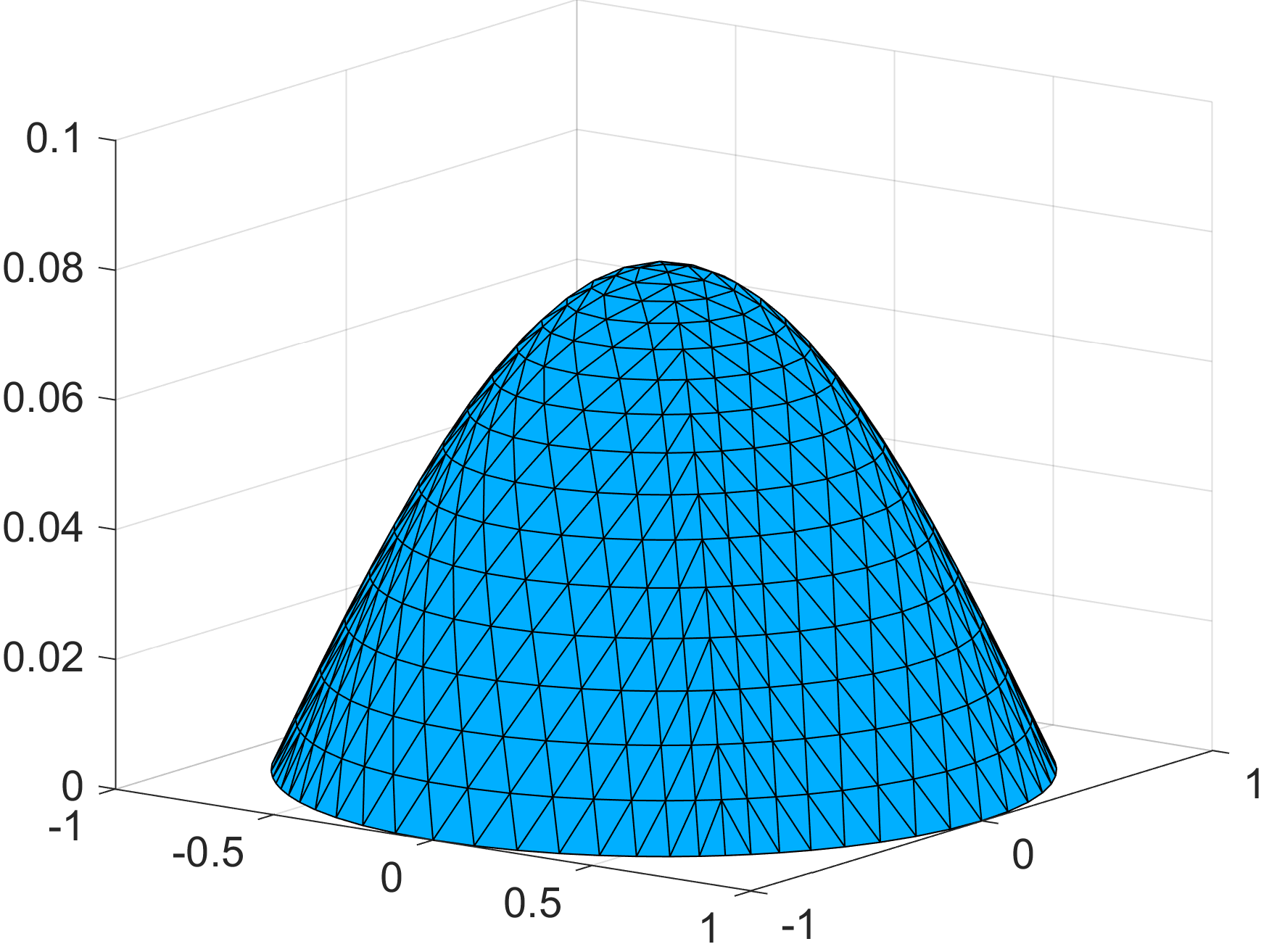} \hspace*{4mm}
\includegraphics[width=4cm]{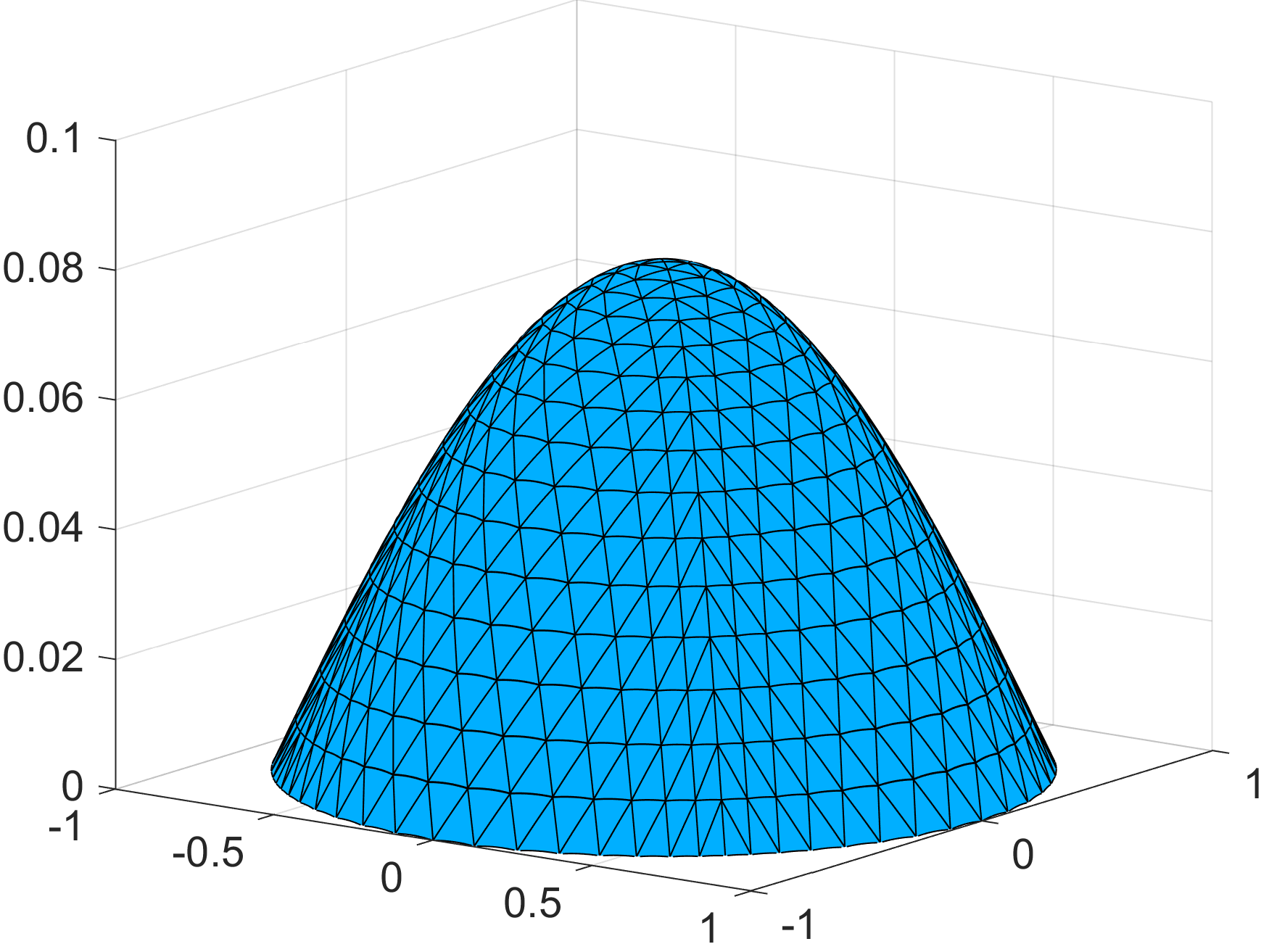} 
\caption{\label{fig:dkt_and_dg} Finite element approximations
obtained with the discrete Kirchhoff element (left) and a discontinuous 
Galerkin method (right).}
\end{figure}

\subsection{Convergence rates}
The plots shown in Figures~\ref{fig:midpoint} and~\ref{fig:h2-error} display
the convergence behavior of the errors in approximating the midpoint
value $u(0)$ and with respect to discrete $H^2$ norms, i.e., the
error quantities 
\[
\d^{\rm mp}_h = |u_h(0)-u(0)|, \quad
\d^{H^2}_h = \|u_h - \cI_h u\|_{a_h},
\]
where $a_h$ is the discrete bilinear form defined by the different
numerical methods and $\cI_h$ is the corresponding nodal interpolant. 
We observe from Figure~\ref{fig:midpoint} that the approximations obtained 
with the Argyris method and
reduced boundary condition (Argyris, nodal support), with the discrete Kirchhoff method (DKT),
and discontinuous Galerkin methods with quadratic polynomials and
linear (DG, $P1$ boundary) as well as isoparametric quadratic (DG, $P2$ boundary)
approximations of the domain boundary provide highly accurate approximations
of the exact value $u(0) = 5/64$. A $P1$ implementation of the operator
splitting approach leads to the expected approximation of the 
incorrect value $u_\infty(0)=3/64$. The Argyris element with simple
support boundary along the entire boundary (Argyris, full support) 
indicates convergence to another, lower and incorrect, value. 
A critical conditioning of the system matrix defined by the Argyris element 
leads to spurious values for fine meshes. 

\begin{figure}[htb]
\includegraphics[width=.75\linewidth]{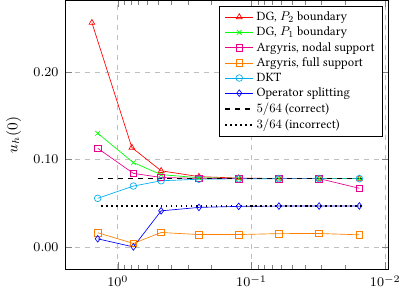} 
\caption{\label{fig:midpoint} Midpoint values for different numerical methods
and mesh sizes in Example~\ref{ex:bab_ex}. The Argyris method with simple support
condition on the entire boundary and the operator splitting approach lead to 
incorrect approximations.}
\end{figure}

The convergence behavior of the $H^2$ errors shown in Figure~~\ref{fig:h2-error}
confirms that not only the midpoint values are correctly approximated but convergence 
to the exact solution 
takes place for the Argyris method with nodal boundary condition,
the discrete Kirchhoff element, as well as for the affine and isoparametric
variants of the quadratic discontinuous Galerkin methods. While all methods
lead to positive convergence rates,
the nonconforming method leads to a quadratic experimental convergence rate despite
the involved domain approximations. 

\begin{figure}[htb]
\includegraphics[width=.75\linewidth]{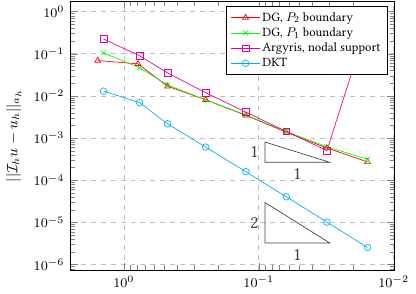}
\caption{\label{fig:h2-error} Experimental convergence behavior for the Argyris
method with boundary conditions imposed at the corner points, the discrete Kirchhoff
method, and quadratic discontinuous Galerkin methods.}
\end{figure}


\subsection*{Acknowledgments}
The authors thank Professor Andrea Bonito for pointing out the relevance
of the representation of the total curvature as a boundary integral. 
The authors are furthermore 
grateful to Professor Guido Sweers for stimulating discussions.
Support by the DFG via the priority programme SPP
2256 {\em Variational Methods for Predicting Complex Phenomena in Engineering Structures
and Materials} (BA 2268/7-2) is thankfully acknowledged.

\section*{References}
\printbibliography[heading=none]

\end{document}